\begin{document}

\title{REGULARITY OF BOUNDED TRI-LINEAR MAPS AND THE FOURTH  ADJOINT OF A TRI-DERIVATION
}
\subtitle{Do you have a subtitle?\\ If so, write it here}

\titlerunning{REGULARITY  AND THE FOURTH  ADJOINT }        

\author{Abotaleb Sheikhali \and Ali Ebadian\and Kazem Haghnejad Azar 
}

\authorrunning{A. Sheikhali A. Ebadian K. Haghnejad Azar} 

\institute{A. Sheikhali \at
                        Department of Mathematics, Payame Noor University (PNU), Tehran, Iran.\\
                       \email{Abotaleb.sheikhali.20@gmail.com}           
                                   \and
           A. Ebadian \at
Department of Mathematics, Payame Noor University (PNU), Tehran, Iran.\\           
              \email{Ebadian.ali@gmail.com}
               \and
 K. Haghnejad Azar \at               
Department of Mathematics, University of Mohaghegh Ardabili \at               
              \email{Haghnejad@uma.ac.ir}
}

\date{Received: date / Accepted: date}

\maketitle

\begin{abstract}
In this Article, we give a simple criterion for the regularity of a tri-linear mapping. We provide if $f:X\times Y\times Z\longrightarrow W $ is a bounded tri-linear mapping and $h:W\longrightarrow S$ is a bounded linear mapping, then $f$ is regular if and only if $hof$ is regular. We also  shall give some necessary and sufficient conditions such that the fourth adjoint $D^{****}$ of a tri-derivation $D$ is again tri-derivation.
\keywords{Fourth adjoint \and Regular \and Tri-derivation \and Tri-linear.}
 \subclass{MSC 46H25  \and MSC 46H20 \and MSC 47B47 \and MSC 16W25.}
\end{abstract}

\section{\textbf{Introduction and preliminaries}}

Richard Arens showed in \cite{arens}  that a bounded bilinear map $m:X\times Y\longrightarrow Z $ on normed spaces, has two natural  different extensions $m^{***}$, $m^{r***r}$ from $X^{**}\times Y^{**}$ into $Z^{**}$. When these extensions are equal, $m$ is called Arens regular. A Banach algebra $A$ is said to be Arens regular, if its product $\pi(a,b)=ab$ considered as a bilinear mapping $\pi : A\times A\longrightarrow A$ is Arens regular. The first and second Arens products of $A^{**}$ by symbols $\square$ and $\lozenge$ respectively  defined by 
$$a^{**}\square b^{**}=\pi^{***}(a^{**},b^{**})\ \ \ ,\ \ \ a^{**}\lozenge b^{**}=\pi^{r***r}(a^{**},b^{**}). $$
Some characterizations for the Arens regularity of bounded bilinear map $m$ and Banach algebra $A$ are proved in \cite{1}, \cite{2}, \cite{arens}, \cite{4}, \cite{5}, \cite{11}, \cite{14}, \cite{16} and \cite{17}.
 Suppose $X, Y, Z, W$ and $S$ are normed spaces and $f:X\times Y\times Z\longrightarrow W $ is a bounded tri-linear mapping. In this paper we first define regularity of $f$ map and showing that $f$  is regular if and only if $f^{***r*}(X^{**},W^{*},Z)\subseteq Y^{*}$ and $f^{*****}(W^{*},X^{**},Y^{**})\subseteq Z^{*}$. Also we show that for a bounded tri-linear map $f:X\times Y\times Z\longrightarrow W $ and a bounded linear operator $h:W\longrightarrow S$,  $f$ is regular if and only if $hof$ is regular. 
 
 The natural extensions  of $f$ are  as follows:

\begin{enumerate}
\item  $f^{*}:W^{*}\times X\times Y\longrightarrow Z^{*}$, given by $\langle f^{*}(w^{*},x,y),z\rangle=\langle w^{*},f(x,y,z)\rangle$ where $x\in X,~ y\in Y, ~z\in Z,~ w^{*}\in W^{*}$ ($f^{*}$ is said  the adjoint of $f$ and is a bounded tri-linear map).

\item  $f^{**}=(f^*)^*:Z^{**}\times W^{*}\times X\longrightarrow Y^{*}$, given by $\langle f^{**}(z^{**},~w^{*},x),y\rangle=\langle z^{**}, f^{*}(w^{*},x,y)$ where $x\in X,~ y\in Y,~ z^{**}\in Z^{**}, ~w^{*}\in W^{*}$.

\item $f^{***}=(f^{**})^*:Y^{**}\times Z^{**}\times W^{*}\longrightarrow X^{*}$, given by $\langle f^{***}(y^{**},z^{**},w^{*}),x\rangle=\langle y^{**},  f^{**}(z^{**},w^{*},x) \rangle$ where $x\in X,~ y^{**}\in Y^{**},~ z^{**}\in Z^{**}, ~w^{*}\in W^{*}$.

\item $f^{****}=(f^{***})^*:X^{**}\times Y^{**}\times Z^{**}\longrightarrow W^{**}$, given by $\langle f^{****}(x^{**}, y^{**},z^{**})$\\$,w^{*}\rangle=\langle x^{**}, f^{***}(y^{**},z^{**},w^{*}) \rangle$ where $x^{**}\in X^{**},~ y^{**}\in Y^{**},~ z^{**}\in Z^{**},~ w^{*}\in W^{*}$.
\end{enumerate}
Now let $f^r:Z\times Y\times X\longrightarrow W$ be the flip of $f$ defined by $f^r(z,y,x)=f(x,y,z)$, for every $x\in X ,y\in Y$ and $z\in Z$. Then $f^r$ is a bounded tri-linear map and  it may  extends as above to $f^{r****}:Z^{**}\times Y^{**}\times X^{**}\longrightarrow W^{**}$. When $f^{****}$ and $f^{r****r}$ are equal, then $f$ is said to be regular.

Suppose $A$  is a Banach algebra and $\pi_{1}:A\times X \longrightarrow X$ is a bounded bilinear map. The pair $(\pi_{1},X)$ is said to be a left Banach $A-$module when $\pi_{1}(\pi_1(a,b),x)=\pi_{1}(a,\pi_{1}(b,x))$, for each $a,b\in A$ and $x\in X$.  A right Banach $A-$module may is defined similarly. Let  $\pi_{2}:X\times A \longrightarrow X$ be a bounded bilinear map. The pair $(X,\pi_{2})$ is said to be a right Banach $A-$module if $\pi_{2}(x,\pi_2(a,b))=\pi_{2}(\pi_{2}(x,a),b)$. A triple $(\pi_{1},X,\pi_{2})$ is said to be a Banach $A-$module if  $(X,\pi_{1})$ and $(X,\pi_{2})$ are left and right Banach $A-$modules, respectively, and $\pi_{1}(a,\pi_{2}(x,b))=\pi_{2}(\pi_1(a,x),b)$.
Let $(\pi_{1},X,\pi_{2})$ be a Banach $A-$module. Then $(\pi_{2}^{r*r},X^{*},\pi_{1}^{*})$ is the dual Banach $A-$module of $(\pi_{1},X,\pi_{2})$.

 A bounded linear mapping $D_{1}:A\longrightarrow X^{*}$ is said to be a derivation if for each $a,b\in A$ $$D_{1}(\pi(a,b))=\pi_{1}^{*}(D_{1}(a),b)+\pi_{2}^{r*r}(a,D_{1}(b)).$$ A bounded bilinear map $D_{2}:A\times A\longrightarrow X$(or $X^{*}$)  is called a bi-derivation, if for each $a,b,c$ and $d\in A$
 \begin{eqnarray*}
&&D_{2}(\pi(a,b),c)=\pi_{1}(a,D_{2}(b,c))+\pi_{2}(D_{2}(a,c),b),\\
&&D_{2}(a,\pi(b,c))=\pi_{1}(b,D_{2}(a,c))+\pi_{2}(D_{2}(a,b),c).  
 \end{eqnarray*}
Let $D_{1}:A\longrightarrow A^{*}$ be a derivation. Dales, Rodriguez and Velasco,  in \cite{8} showed that $D_{1}^{**}:(A^{**},\square)\longrightarrow A^{***}$ is a derivation if and only if $\pi^{r****}(D_{1}^{**}(A^{**}),A^{**})$\\$\subseteq A^{*}$.
In \cite{15}, S. Mohamadzadeh and H. Vishki extends this and showed that second adjont $D_{1}^{**}:(A^{**},\square)\longrightarrow A^{***}$ is a derivation if and only if $\pi_{2}^{****}(D_{1}^{**}(A^{**}),X^{**})\subseteq A^{*}$
and which $D_{1}^{**}:(A^{**},\lozenge)\longrightarrow A^{***}$ is a derivation if and only if $\pi_{1}^{r****}(D_{1}^{**}(A^{**}),X^{**})\subseteq A^{*}$.

A. Erfanian Attar et al, provide condition such that the third adjoint $D_{2}^{***}$ of a bi-derivation $D_{2}:A\times A\longrightarrow X$ (or $X^{*}$) is again a bi-derivation, see \cite{10}.  For a Banach $A-$module $(\pi_{1},X,\pi_2)$, the fourth adjoint $D^{****}$ of a tri-derivation $D:A\times A\times A\longrightarrow  X^{*}$  is trivially a tri-linear extension of $D$. A problem which is of interest is under what conditions we need that $D^{****}$ is again a tri-derivation. In section 4  we will extend above mentioned result. 
A bounded trilinear mapping $f:X\times Y\times Z\longrightarrow W $ is said to factor if it is surjective, that is $ f(X\times Y\times Z) =W$.

Throughout the article, we usually identify a normed space with its canonical image in its second dual.

\section{\textbf{Regularity of bounded tri-linear maps}}
\begin{theorem}\label{2.1}
Let $f:X\times Y\times Z\longrightarrow W$ be a bounded tri-linear map. Then $f$ is regular  if and only if 
$$w^{*}-\lim\limits_\alpha w^{*}-\lim\limits_\beta w^{*}-\lim\limits_\gamma f(x_\alpha,y_\beta,z_{\gamma})=w^{*}-\lim\limits_\gamma w^{*}-\lim\limits_\beta w^{*}-\lim\limits_\alpha f(x_\alpha,y_\beta,z_{\gamma}),$$
where $\{x_{\alpha} \}, \{y_{\beta} \}$ and $\{z_{\gamma} \}$ are nets in $X, Y$ and $Z$  which converge to $x^{**}\in X^{**},y^{**}\in Y^{**}$ and $z^{**}\in Z^{**}$  in the $w^{*}-$topologies, respectively.
\end{theorem}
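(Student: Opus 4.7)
The plan is to unfold the iterated adjoints using Goldstine's theorem, so that $f^{****}(x^{**},y^{**},z^{**})$ and $f^{r****r}(x^{**},y^{**},z^{**})$ are realized as two specific iterated weak-$\ast$ limits of $f(x_\alpha,y_\beta,z_\gamma)$, and then read off the equivalence. Concretely, by Goldstine we fix nets $\{x_\alpha\}\subseteq X$, $\{y_\beta\}\subseteq Y$, $\{z_\gamma\}\subseteq Z$ with $x_\alpha\xrightarrow{w^*}x^{**}$, $y_\beta\xrightarrow{w^*}y^{**}$, $z_\gamma\xrightarrow{w^*}z^{**}$, and test against an arbitrary $w^*\in W^*$.

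First I would compute $\langle f^{****}(x^{**},y^{**},z^{**}),w^*\rangle$ by peeling off one adjoint at a time, using in each step that the inner quantity lies in the predual of the factor being limited. Using the four defining formulas for $f^*,f^{**},f^{***},f^{****}$ one obtains
\begin{align*}
\langle f^{****}(x^{**},y^{**},z^{**}),w^*\rangle
&=\langle x^{**},f^{***}(y^{**},z^{**},w^*)\rangle\\
&=\lim_\alpha\langle y^{**},f^{**}(z^{**},w^*,x_\alpha)\rangle\\
&=\lim_\alpha\lim_\beta\langle z^{**},f^{*}(w^*,x_\alpha,y_\beta)\rangle\\
&=\lim_\alpha\lim_\beta\lim_\gamma\langle w^*,f(x_\alpha,y_\beta,z_\gamma)\rangle.
\end{align*}
Applying the same computation to the flip $f^r$ and using $f^{r****r}(x^{**},y^{**},z^{**})=f^{r****}(z^{**},y^{**},x^{**})$ yields
\[
\langle f^{r****r}(x^{**},y^{**},z^{**}),w^*\rangle
=\lim_\gamma\lim_\beta\lim_\alpha\langle w^*,f(x_\alpha,y_\beta,z_\gamma)\rangle.
\]

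With these two identities in hand, the theorem is immediate: $f$ is regular precisely when $f^{****}=f^{r****r}$, which, by the Hahn--Banach separation of $W^{**}$ from $W^*$, is equivalent to the equality of the two scalar iterated limits above for every choice of $w^*\in W^*$ and every such triple of approximating nets. Conversely, given arbitrary $x^{**},y^{**},z^{**}$ Goldstine supplies the requisite nets, so the pointwise hypothesis on iterated limits forces $f^{****}=f^{r****r}$ at every triple.

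The only delicate point is bookkeeping the order in which the adjoints are applied and ensuring that at each stage the element being evaluated belongs to the correct dual space, so that replacing a double-dual element by the weak-$\ast$ limit of its net is legitimate; this is routine once the pattern of the first limit is set, and the flip computation is symmetric. I do not anticipate any substantive analytic obstacle beyond this careful indexing of the adjoints.
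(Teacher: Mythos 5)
Your proposal is correct and follows essentially the same route as the paper: unfold $f^{****}$ and $f^{r****r}$ through the defining adjoint formulas against a fixed $w^{*}\in W^{*}$, obtaining the two iterated limits $\lim_\alpha\lim_\beta\lim_\gamma$ and $\lim_\gamma\lim_\beta\lim_\alpha$ of $\langle f(x_\alpha,y_\beta,z_\gamma),w^{*}\rangle$, and then read off the equivalence with $f^{****}=f^{r****r}$. The only cosmetic remark is that no Hahn--Banach separation is needed at the end: equality of two elements of $W^{**}$ is, by definition, their agreement on every $w^{*}\in W^{*}$, which is exactly what the iterated-limit identity expresses.
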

\begin{proof}
For every $w^{*}\in W^{*}$ we have
\begin{eqnarray*}
&&\langle f^{****}(x^{**},y^{**},z^{**}),w^{*}\rangle = \langle x^{**},f^{***}(y^{**},z^{**},w^{*})\rangle\\
&&= \lim\limits_\alpha\langle f^{***}(y^{**},z^{**},w^{*}),x_{\alpha}\rangle= \lim\limits_\alpha\langle y^{**},f^{**}(z^{**},w^{*},x_{\alpha})\rangle\\
&&=\lim\limits_\alpha\lim\limits_\beta\langle f^{**}(z^{**},w^{*},x_{\alpha}),y_{\beta}\rangle=\lim\limits_\alpha\lim\limits_\beta\langle z^{**},f^{*}(w^{*},x_{\alpha},y_{\beta})\rangle\\
&&=\lim\limits_\alpha\lim\limits_\beta\lim\limits_\gamma\langle f^{*}(w^{*},x_{\alpha},y_{\beta}),z_{\gamma}\rangle= \lim\limits_\alpha\lim\limits_\beta\lim\limits_\gamma\langle f(x_{\alpha},y_{\beta},z_{\gamma}),w^{*}\rangle.
\end{eqnarray*}
Therefore $f^{****}(x^{**},y^{**},z^{**})=w^{*}-\lim\limits_\alpha w^{*}-\lim\limits_\beta w^{*}-\lim\limits_\gamma f(x_\alpha,y_\beta,z_{\gamma})$. In the other hands 
 $f^{r****r}(x^{**},y^{**},z^{**})=w^{*}-\lim\limits_\gamma w^{*}-\lim\limits_\beta w^{*}-\lim\limits_\alpha f(x_\alpha,y_\beta,z_{\gamma})$, and proof follows.
\end{proof}
In the following theorem, we provide a criterion concerning to the regularity of a bounded tri-linear map.
\begin{theorem}\label{2.2}
For a bounded tri-linear map $f:X\times Y\times Z\longrightarrow W$  the following statements are equivalent:
\begin{enumerate}
\item $f$ is regular.

\item $f^{*****}=f^{r*******r}.$

\item $f^{***r*}(X^{**},W^{*},Z)\subseteq Y^{*}$ and $f^{*****}(W^{*},X^{**},Y^{**})\subseteq Z^{*}.$
\end{enumerate}
\end{theorem}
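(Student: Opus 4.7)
The plan is to establish the cycle $(1)\Rightarrow(2)\Rightarrow(3)\Rightarrow(1)$.

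For $(1)\Rightarrow(2)$, I would start from the regularity identity $f^{****}=f^{r****r}$ and take one further adjoint of both sides. Using the defining pairing $\langle f^{*****}(w^{*},x^{**},y^{**}),z^{**}\rangle=\langle w^{*},f^{****}(x^{**},y^{**},z^{**})\rangle$ together with $(1)$, and then carefully tracking how the flip $r$ interleaves with successive adjoint stars on the right-hand side, one identifies the resulting quantity with $\langle f^{r*******r}(w^{*},x^{**},y^{**}),z^{**}\rangle$. Since $z^{**}\in Z^{**}$ is arbitrary and the other variables range freely, $(2)$ follows.

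For $(2)\Rightarrow(3)$, I would read off both inclusions by restricting the equality in $(2)$ to canonical subspaces. Specialising a dual variable to an element of the canonical copy of the original space inside the higher dual converts the identity into a statement about functionals that must be weak-$*$ continuous, which is precisely the assertion that they lie in the predual. This yields $f^{***r*}(X^{**},W^{*},Z)\subseteq Y^{*}$ (after fixing the $Z^{**}$-variable in $Z$) and $f^{*****}(W^{*},X^{**},Y^{**})\subseteq Z^{*}$ (after regarding $w^{*}\in W^{*}\subseteq W^{***}$, which forces the resulting functional on $Z^{**}$ to be weak-$*$ continuous).

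For $(3)\Rightarrow(1)$, I would invoke Theorem~\ref{2.1}. Let $\{x_{\alpha}\}$, $\{y_{\beta}\}$, $\{z_{\gamma}\}$ be nets weak-$*$ converging to $x^{**}$, $y^{**}$, $z^{**}$. The first inclusion $f^{***r*}(X^{**},W^{*},Z)\subseteq Y^{*}$ asserts, after unfolding the definitions of the flip and the successive adjoints, that for each $z\in Z$,
\[
\lim_{\alpha}\lim_{\beta}\langle w^{*},f(x_{\alpha},y_{\beta},z)\rangle=\lim_{\beta}\lim_{\alpha}\langle w^{*},f(x_{\alpha},y_{\beta},z)\rangle;
\]
the second inclusion $f^{*****}(W^{*},X^{**},Y^{**})\subseteq Z^{*}$ similarly allows the $\gamma$-limit to be pulled outside of the $\alpha$- and $\beta$-limits. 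Chaining these swaps reverses the full triple iterated limit, and Theorem~\ref{2.1} then gives $f^{****}=f^{r****r}$.

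The main obstacle I anticipate lies in $(3)\Rightarrow(1)$: the two subspace inclusions must be used in concert, since each individually only permits the exchange of a single pair of iterated limits, so one needs a careful chain of two-limit swaps to completely reverse the triple iterated limit, with careful attention to which net is outermost at each stage. By comparison, the step $(1)\Rightarrow(2)$ is a mostly mechanical but notationally heavy unfolding, and $(2)\Rightarrow(3)$ is essentially a direct restriction-and-identification argument.
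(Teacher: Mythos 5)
Your proposal is correct and takes essentially the same route as the paper: the $(1)\Rightarrow(2)$ adjoint computation, the identification of the restricted higher adjoints with $f^{r**}$ and $f^{r***r}$ on canonical copies (which the paper carries out in its $(2)\Rightarrow(1)$ and $(1)\Rightarrow(3)$ steps), and the net/iterated-limit argument via Theorem~\ref{2.1} for $(3)\Rightarrow(1)$ are all the same computations. The only difference is organizational: you close the cycle $(1)\Rightarrow(2)\Rightarrow(3)\Rightarrow(1)$, whereas the paper proves the equivalences $(1)\Leftrightarrow(2)$ and $(1)\Leftrightarrow(3)$ separately with $(1)$ as the hub, which does not change the substance.
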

\begin{proof}
(1) $\Rightarrow$ (2), if $f$ is regular, then $f^{****}=f^{r****r}$. For every $x^{**}\in X^{**},y^{**}\in Y^{**},z^{**}\in Z^{**}$ and $w^{***}\in W^{***}$ we have 
\begin{eqnarray*}
&&\langle f^{*****}(w^{***},x^{**},y^{**}),z^{**}\rangle =\langle w^{***},f^{****}(x^{**},y^{**},z^{**})\rangle\\
&&=\langle w^{***},f^{r****r}(x^{**},y^{**},z^{**})\rangle=\langle f^{r*******r}(w^{***},x^{**},y^{**}),z^{**}\rangle.
\end{eqnarray*}
as claimed.

(2) $\Rightarrow$ (1), let $f^{*****}=f^{r*******r}$, then for every $w^{*}\in W^{*}$, 
\begin{eqnarray*}
&&\langle f^{r****r}(x^{**},y^{**},z^{**}),w^{*}\rangle=\langle f^{r*******r}(w^{*},x^{**},y^{**}),z^{**}\rangle\\
&&=\langle f^{*****}(w^{*},x^{**},y^{**}),z^{**}\rangle=\langle f^{****}(x^{**},y^{**},z^{**}),w^{*}\rangle.
\end{eqnarray*}
It follows that $f$ is regular.

(1) $\Rightarrow$ (3), assume that $f$ is regular and  $x^{**}\in X^{**},y^{**}\in Y^{**},z\in Z,w^{*}\in W^{*}$. Then we have
\begin{eqnarray*}
&&\langle f^{***r*}(x^{**},w^{*}, z),y^{**}\rangle =\langle f^{****}(x^{**},y^{**},z),w^{*}\rangle\\
&&=\langle f^{r****r}(x^{**},y^{**},z),w^{*}\rangle=\langle f^{r**}(x^{**},w^{*},z),y^{**}\rangle.
\end{eqnarray*}
Therefore  $f^{***r*}(x^{**},w^{*},z)=f^{r**}(x^{**},w^{*},z)\in Y^{*}$. So $f^{***r*}(X^{**},W^{*},Z)\subseteq Y^{*}$. A similar argument shows that $f^{*****}(w^{*},x^{**},y^{**})=f^{r***r}(w^{*},x^{**},y^{**})\in Z^{*}$. Thus $f^{*****}(W^{*},X^{**},Y^{**})\subseteq Z^{*}$, as claimed.

(3) $\Rightarrow$ (1), let $\{x_{\alpha} \}, \{y_{\beta} \}$ and $\{z_{\gamma} \}$ are nets in $X, Y$ and $Z$  which converge to $x^{**},y^{**}$ and $z^{**}$  in the $w^{*}-$topologies, respectively. For every $w^{*}\in W^{*}$ we have
\begin{eqnarray*}
&&\langle f^{r****r}(x^{**},y^{**},z^{**}),w^{*}\rangle =\lim\limits_\gamma\lim\limits_\beta\lim\limits_\alpha\langle f(x_{\alpha},y_{\beta},z_{\gamma}),w^{*}\rangle\\
&&=\lim\limits_\gamma\lim\limits_\beta\lim\limits_\alpha\langle f^{***}(y_{\beta},z_{\gamma},w^{*}),x_{\alpha}\rangle=\lim\limits_\gamma\lim\limits_\beta\langle x^{**},f^{***}(y_{\beta},z_{\gamma},w^{*})\\
&&=\lim\limits_\gamma\lim\limits_\beta\langle x^{**},f^{***r}(w^{*},z_{\gamma},y_{\beta})\rangle=\lim\limits_\gamma\lim\limits_\beta\langle f^{***r*}(x^{**},w^{*},z_{\gamma}),y_{\beta}\rangle\\
&&=\lim\limits_\gamma \langle f^{***r*}(x^{**},w^{*},z_{\gamma}),y^{**}\rangle=\lim\limits_\gamma \langle x^{**}, f^{***r}(w^{*},z_{\gamma},y^{**})\rangle\\
&&=\lim\limits_\gamma \langle x^{**}, f^{***}(y^{**},z_{\gamma},w^{*})\rangle=\lim\limits_\gamma \langle f^{****}(x^{**},y^{**},z_{\gamma}),w^{*}\rangle\\
&&=\lim\limits_\gamma \langle f^{*****}(w^{*},x^{**},y^{**}),z_{\gamma}\rangle=f^{*****}(w^{*},x^{**},y^{**}),z^{**}\rangle\\
&&=\langle f^{****}(x^{**},y^{**},z^{**}),w^{*}\rangle.
\end{eqnarray*}
It follows that $f$ is regular and this completes the proof.
\end{proof}

\begin{corollary}\label{2.3}
For a bounded tri-linear map $f:X\times Y\times Z\longrightarrow W$  the following statements are equivalent:
\begin{enumerate}
\item $f$ is regular.

\item $f^{r*****r}=f^{*******}$.

\item $f^{r***r*}(Z^{**},W^{*},X)\subseteq Y^{*}$ and $f^{*****}(W^{*},Z^{**},Y^{**})\subseteq X^{*}.$
\end{enumerate}
\end{corollary}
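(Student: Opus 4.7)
The plan is to deduce this corollary by a direct reduction: apply Theorem~\ref{2.2} to the flipped map $g := f^{r} : Z \times Y \times X \longrightarrow W$, instead of repeating the weak-star limit manipulations from scratch. The corollary is stated in the ``mirror'' positions to Theorem~\ref{2.2}, so this reduction should yield exactly the three conditions in (1)--(3).

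The preliminary step is the symmetry $f$ is regular if and only if $f^{r}$ is regular. Unwinding the definition, $f$ regular means $f^{****}(x^{**},y^{**},z^{**}) = f^{r****}(z^{**},y^{**},x^{**})$, while $f^{r}$ regular means $(f^{r})^{****}(z^{**},y^{**},x^{**}) = (f^{r})^{r****}(x^{**},y^{**},z^{**})$; since $(f^{r})^{r} = f$, the two equalities say exactly the same thing, so the two notions of regularity coincide.

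Given this symmetry, conditions (1)--(3) of the corollary become, respectively, conditions (1)--(3) of Theorem~\ref{2.2} applied to $g = f^{r}$, once every abbreviation $g^{\ldots}$ is rewritten as $f^{r\ldots}$. In particular, $g^{*****} = f^{r*****}$, while $g^{r*******r} = f^{*******r}$ because the initial $^{r}$ cancels the $^{r}$ inherited from $g$ (via $(f^{r})^{r}=f$); applying one more $^{r}$ to both sides of the resulting identity $f^{r*****} = f^{*******r}$ yields condition (2) of the corollary, $f^{r*****r} = f^{*******}$. The containments in (3) are obtained in the same manner, with $g^{***r*} = f^{r***r*}$ controlling the first containment and $g^{*****} = f^{r*****}$ controlling the second.

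The main obstacle is purely notational: one has to track the precise sequence of adjoints and flips and the domains on which they act, taking care that each step preserves the correct range of variables. Once the identification $g = f^{r}$ is made, no genuinely new weak-star limit computation is needed, and the corollary emerges as a clean translation of Theorem~\ref{2.2} rather than an independent argument.
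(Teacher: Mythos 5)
Your reduction to Theorem~\ref{2.2} applied to $g=f^{r}$, via the symmetry that $f$ is regular if and only if $f^{r}$ is, is exactly the paper's own (one-line) proof of Corollary~\ref{2.3}, with you merely making the flip/adjoint bookkeeping explicit. (Note that your translation actually yields $f^{r*****}(W^{*},Z^{**},Y^{**})\subseteq X^{*}$ for the second containment, which, given the argument types, is what the printed $f^{*****}$ in item (3) must be read as.)
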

\begin{proof}
The mapping $f$ is regular if and only if $f^{r}$ is regular. Therefore by Theorem \ref{2.2}, the desired result is obtained.
\end{proof}
\begin{corollary}\label{2.4}
For a bounded tri-linear map $f:X\times Y\times Z\longrightarrow W$, if from $X, Y$ or $Z$  at least two reflexive then $f$ is regular.
\end{corollary}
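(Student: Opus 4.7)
The plan is to reduce the statement to Theorem~\ref{2.1} by exploiting the freedom in the choice of approximating nets. The crucial observation, already established inside the proof of Theorem~\ref{2.1}, is that for any nets $\{x_\alpha\}\subset X$, $\{y_\beta\}\subset Y$, $\{z_\gamma\}\subset Z$ converging in the $w^*$-topologies to $x^{**},y^{**},z^{**}$, the iterated limit $w^*\text{-}\lim_\alpha w^*\text{-}\lim_\beta w^*\text{-}\lim_\gamma f(x_\alpha,y_\beta,z_\gamma)$ coincides with the fixed element $f^{****}(x^{**},y^{**},z^{**})$, and the reversed iterated limit coincides with $f^{r****r}(x^{**},y^{**},z^{**})$. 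In particular these iterated limits depend only on $x^{**},y^{**},z^{**}$ and not on the approximating nets, so whenever a variable corresponds to a reflexive factor the associated net may be taken constant.

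I would then split into three cases according to which pair among $X,Y,Z$ is reflexive. Suppose first that $X$ and $Y$ are reflexive. Then any $x^{**}\in X^{**}$ and $y^{**}\in Y^{**}$ are actually elements $x\in X$ and $y\in Y$ under the canonical embedding, so one may take the constant nets $x_\alpha\equiv x$ and $y_\beta\equiv y$; both iterated $w^*$-limits in Theorem~\ref{2.1} then collapse to the single expression $w^*\text{-}\lim_\gamma f(x,y,z_\gamma)$ and manifestly coincide. The remaining two cases are analogous: if $Y,Z$ are reflexive one takes $\{y_\beta\}$ and $\{z_\gamma\}$ constant, leaving only the outer limit in $\alpha$ on each side; if $X,Z$ are reflexive one takes $\{x_\alpha\}$ and $\{z_\gamma\}$ constant, leaving only the middle limit in $\beta$.

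In every case the two iterated limits of Theorem~\ref{2.1} coincide, giving the regularity of $f$. I do not anticipate a genuine obstacle here: the whole argument is essentially a one-line consequence of Theorem~\ref{2.1} once one notes that the iterated limits are determined by $x^{**},y^{**},z^{**}$ alone, which is visible from the computation performed inside the proof of that theorem. A more algebraic alternative would be to verify condition (3) of Theorem~\ref{2.2} directly: the inclusion $f^{***r*}(X^{**},W^*,Z)\subseteq Y^*$ is automatic when $Y$ is reflexive (because then $Y^{***}=Y^*$), the inclusion $f^{*****}(W^*,X^{**},Y^{**})\subseteq Z^*$ is automatic when $Z$ is reflexive (because then $Z^{***}=Z^*$), and the one remaining case $X,Y$ reflexive with $Z$ not reflexive is handled by a short calculation showing $f^{*****}(w^*,x,y)=f^*(w^*,x,y)\in Z^*$ for $x\in X$, $y\in Y$.
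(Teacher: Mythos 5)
Your proposal is correct, but your primary argument takes a genuinely different route from the paper. The paper proves the corollary through condition (3) of Theorem \ref{2.2}: treating (only) the case where $Y$ and $Z$ are reflexive, the inclusions $f^{***r*}(X^{**},W^{*},Z)\subseteq Y^{***}=Y^{*}$ and $f^{*****}(W^{*},X^{**},Y^{**})\subseteq Z^{***}=Z^{*}$ are immediate from reflexivity, and regularity follows; the other configurations of two reflexive factors are not written out. Your main argument instead verifies the iterated-limit criterion of Theorem \ref{2.1} directly, resting on the observation (correctly extracted from the proof of that theorem) that the two iterated limits equal $f^{****}(x^{**},y^{**},z^{**})$ and $f^{r****r}(x^{**},y^{**},z^{**})$ for \emph{every} choice of approximating nets, so that constant nets may be inserted in the reflexive slots; this collapses both iterated limits to one and the same single limit and handles the three cases $\{X,Y\}$, $\{Y,Z\}$, $\{X,Z\}$ uniformly and completely. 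Your closing ``algebraic alternative'' is essentially the paper's own argument, and your remark that the case $X,Y$ reflexive needs the additional identity $f^{*****}(w^{*},x,y)=f^{*}(w^{*},x,y)\in Z^{*}$ (which indeed holds by a one-line computation with the canonical embeddings) supplies precisely the case the paper glosses over with ``without having to enter the whole argument.'' Both routes are sound: the paper's is shorter for the $Y,Z$ case, while yours makes the net-independence explicit and covers all cases without appeal to symmetry.
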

\begin{proof}
Without having to enter the whole argument, let $Y$ and $Z$ are reflexive. Since $Y$ is reflexive,  $Y^{*}=Y^{***}$. Therefore
$$f^{***r*}(X^{**},W^{*},Z^{**})\subseteq Y^{***}=Y^{*} \ \ \ \ \ \ \ \ \ \ \ \ \ \ \ (2-1)$$
In the other hands, since $Z$ is the reflexive space, thus 
$$f^{*****}(W^{***},X^{**},Y^{**})\subseteq Z^{***}=Z^{*} \ \ \ \ \ \ \ \ \ \ \ \ \ \ \ (2-2)$$
 Now Using (2-1), (2-2) and Theorem \ref{2.2}, the result holds.
\end{proof}
\begin{corollary}\label{2.5}
Let bounded tri-linear map $f:X\times Y\times Z\longrightarrow W$ be regular. Then
\begin{enumerate}
\item If $f^{***r*}(X^{**},W^{*},Z)$  factors, then $Y$  is reflexive space.

\item If $f^{*****}(W^{*},X^{**},Y^{**})$ factors, then $Z$  is reflexive space.

\item If $f^{****r*}(W^{*},Z,Y)$ factors, then $X$  is reflexive space.
\end{enumerate}
\end{corollary}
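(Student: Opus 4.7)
The plan is to leverage the characterization of regularity in Theorem~\ref{2.2}(3) together with the observation that each of the tri-linear adjoints named in the statement has its natural codomain in a \emph{triple} dual. In each case, regularity forces the range (on the restricted domain) to lie inside the corresponding single dual, while the factoring hypothesis forces equality with the triple dual; reflexivity of the relevant space then follows at once.

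For part (1), the map $f^{***r*}$ has codomain $Y^{***}$. By Theorem~\ref{2.2}(3), regularity of $f$ gives $f^{***r*}(X^{**},W^{*},Z)\subseteq Y^{*}$. If additionally this restriction factors, its range equals $Y^{***}$, so $Y^{*}=Y^{***}$ and $Y$ is reflexive. Part (2) is analogous: $f^{*****}$ has codomain $Z^{***}$, Theorem~\ref{2.2}(3) yields $f^{*****}(W^{*},X^{**},Y^{**})\subseteq Z^{*}$, and factoring forces $Z^{*}=Z^{***}$, so $Z$ is reflexive.

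Part (3) is the only step that does not reduce immediately to Theorem~\ref{2.2}. The map $f^{****r*}$ has codomain $X^{***}$, so I must verify that $f^{****r*}(W^{*},Z,Y)\subseteq X^{*}$. The idea is that because the second and third slots are occupied by elements of the original spaces $Z$ and $Y$ (not of their biduals), no Arens-type reordering issue arises. Unfolding the definitions, $\langle f^{****r*}(w^{*},z,y),x^{**}\rangle=\langle w^{*},f^{****}(x^{**},y,z)\rangle$; choosing a net $\{x_{\alpha}\}\subseteq X$ with $x_{\alpha}\to x^{**}$ in the weak-$*$ topology and invoking Theorem~\ref{2.1}, this equals $\lim_{\alpha}\langle w^{*},f(x_{\alpha},y,z)\rangle=\langle x^{**},u\rangle$, where $u\in X^{*}$ is the functional $x\mapsto \langle w^{*},f(x,y,z)\rangle$. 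Hence $f^{****r*}(w^{*},z,y)=u\in X^{*}$ under the canonical embedding, and the factoring hypothesis then forces $X^{*}=X^{***}$, so $X$ is reflexive.

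The only delicate point I expect is (3), where the needed inclusion is not furnished by Theorem~\ref{2.2} and must be checked directly; however, the calculation above is essentially a bookkeeping exercise on the definitions of the iterated adjoints, and no substantial obstacle arises. Parts (1) and (2) are immediate corollaries of Theorem~\ref{2.2}(3) together with the elementary fact that $Y^{*}\subseteq Y^{***}$ (resp.~$Z^{*}\subseteq Z^{***}$) canonically.
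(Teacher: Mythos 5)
Your proposal is correct and follows essentially the same route as the paper: in each part, Theorem~\ref{2.2}(3) (or a direct computation for the third adjoint combination) gives the inclusion of the range into the single dual, and the factoring (surjectivity onto the triple dual) forces $Y^{***}\subseteq Y^{*}$, $Z^{***}\subseteq Z^{*}$, $X^{***}\subseteq X^{*}$, hence reflexivity. The only cosmetic difference is in part (3): the paper uses the regularity identity $f^{****}=f^{r****r}$ to identify $f^{****r*}(w^{*},z,y)$ with $f^{r*}(w^{*},z,y)\in X^{*}$, while you obtain the same inclusion by weak-$*$ continuity of $f^{****}$ in its first variable (constant nets in the other slots), which is equally valid.
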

\begin{proof}
(1) Let $f$ be regular. It follows that  $f^{***r*}(X^{**},W^{*},Z)\subseteq Y^{*}$. In the other hands, $f^{***r*}(X^{**},W^{*},Z)$ is factor. So for each $y^{***}\in Y^{***}$ there exist $x^{**}\in X^{**},w^{*}\in W^{*}$ and $z\in Z$ such that $f^{***r*}(x^{**},w^{*},z)=y^{***}$. Therefore $Y^{***}\subseteq Y^{*}$.

(2) The proof  similar to (1).

(3) Enough show that $f^{****r*}(W^{*},Z,Y)\subseteq X^{*}$ whenever  $f$ is regular. For every $x^{**}\in X^{**}, y\in Y, z\in Z$ and $w^{*}\in W^{*}$ we have 
\begin{eqnarray*}
&&\langle f^{****r*}(w^{*},z,y),x^{**}\rangle=\langle w^{*},f^{****}(x^{**},y,z)\rangle\\
&&=\langle f^{r****r}(x^{**},y,z),w^{*}\rangle=\langle f^{r*}(w^{*},z,y),x^{**}\rangle.
\end{eqnarray*}
Therefore $f^{****r*}(w^{*},z,y)=f^{r*}(w^{*},z,y)\in X^{*}$. The rest of proof has similar argument such as (1).
\end{proof}

\begin{corollary}\label{2.6}
If $I_{X}$, $I_{Y}$ and $I_{Z}$  are weakly compact identity mapping, then all of them and all of their adjoints  are regular.
\end{corollary}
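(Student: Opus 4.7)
The plan is to unpack the hypothesis as reflexivity of $X$, $Y$ and $Z$, and then invoke Corollary~\ref{2.4}, iterating once for each adjoint. The starting observation is that for a Banach space $X$, the identity $I_{X}$ is weakly compact if and only if $X$ is reflexive, since $I_{X}$ sends the closed unit ball $B_{X}$ to itself and weak compactness of $B_{X}$ is precisely reflexivity. Applied to each of $X$, $Y$, $Z$, the hypothesis forces all three to be reflexive.

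With reflexivity in hand, for any bounded tri-linear map $f:X\times Y\times Z\to W$ all three domain factors are reflexive, so Corollary~\ref{2.4} immediately yields regularity of $f$. To cover ``all their adjoints'' I would march through the iterated adjoints and verify the ``at least two reflexive factors'' condition of Corollary~\ref{2.4} for each one: $f^{*}:W^{*}\times X\times Y\to Z^{*}$ has the reflexive pair $X, Y$; $f^{**}:Z^{**}\times W^{*}\times X\to Y^{*}$ has $Z^{**}\cong Z$ and $X$; $f^{***}:Y^{**}\times Z^{**}\times W^{*}\to X^{*}$ has $Y^{**}\cong Y$ and $Z^{**}\cong Z$; and $f^{****}:X^{**}\times Y^{**}\times Z^{**}\to W^{**}$ has all three factors reflexive. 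Corollary~\ref{2.4} applies in every case.

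The only mild obstacle is bookkeeping: reading off the correct domain triple for each iterated adjoint from the definitions of $f^{*}, f^{**}, f^{***}, f^{****}$ recorded in the introduction, and remembering that reflexivity passes to duals so the two-reflexive-factor condition is preserved throughout the chain. The substantive content is entirely contained in Corollary~\ref{2.4}; this corollary is essentially its weak-compactness repackaging.
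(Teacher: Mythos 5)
Your proof is correct and is exactly the intended argument: the paper states Corollary~\ref{2.6} without any proof, placing it right after Corollary~\ref{2.4}, and the identification ``$I_{X}$ weakly compact $\Leftrightarrow$ $X$ reflexive'' followed by applying Corollary~\ref{2.4} to $f$ and to each iterated adjoint (whose domain triple always contains at least two spaces that are duals or biduals of $X$, $Y$, $Z$, hence at least two reflexive factors) is clearly the deduction the authors have in mind. The only small remark is that ``all of their adjoints'' includes adjoints beyond $f^{****}$, but your bookkeeping extends verbatim, since at most one factor of any iterated adjoint's domain involves duals of $W$.
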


\begin{example}\label{2.7}
\begin{enumerate}
\item Let $G$ be a compact group. Let $1 < p,q < \infty$ and $\frac{1}{p}+\frac{1}{q}=1+\frac{1}{r}$. Then by \cite[Sections 2.4 and 2.5]{12}, we conclude that $L^{1}(G)\star L^{p}(G)\subset L^{p}(G)$ and $L^{p}(G)\star L^{q}(G)\subset L^{r}(G)$ where $(g\star h)(x)=\int_{G}g(y)h(y^{-1}x)dy$ for $x\in G$.  Since the Banach spaces $L^{p}(G)$ and $L^{q}(G)$ are reflexive, thus by corollary \ref{2.4} we conclude that the bounded tri-linear mapping 
$$f:L^{1}(G)\times L^{p}(G)\times L^{q}(G)\longrightarrow L^{r}(G)$$
 defined by $f(k,g,h)=(k\star g)\star h$, is regular for every $k\in L^{1}(G), g\in L^{p}(G)$ and $h\in L^{q}(G)$.  
\item Let $G$ be a locally compact group. We know from \cite{18} that $L^{1}(G)$ is regular if and only if it is reflexive or  $G$ is finite. It follows that for every finite locally compact group $G$, by corollary \ref{2.4},  the bounded tri-linear mapping $f:L^{1}(G)\times L^{1}(G)\times L^{1}(G)\longrightarrow L^{1}(G)$ defined by $f(k,g,h)=k\star g\star h$, is regular for every $k, g$ and $h\in L^{1}(G)$.  
\item $C^{*}-$algebras are standard examples of Banach algebras that are Arens regular, see\cite{6}. We know that a $C^{*}-$algebra is reflexive if and only if it is of finite dimension. Since if $A$ is a finite dimension $C^{*}$-algebra, then  by  corollary \ref{2.4},  we conclude that the bounded tri-linear mapping $f:A\times A\times A\longrightarrow A$ is regular.
\item Let $G$ be a locally compact group and let $M(G)$ be measure algebra of $G$, see \cite[Section 2.5]{12}.  Let the convolution for $\mu_{1},\mu_{2}\in M(G)$ defined by 
$$\int \psi d(\mu_{1}*\mu_{2})=\int\int \psi(xy)d\mu_{1}(x)d\mu_{2}(y),\ \ (\psi\in C_{0}(G)).$$
We have
\begin{eqnarray*}
\int \psi d(\mu_{1}*(\mu_{2}*\mu_{3}))&=&\int\int\int \psi(xyz)d\mu_{1}(x)d\mu_{2}(y)d\mu_{3}(z)\\
&=&\int \psi d((\mu_{1}*\mu_{2})*\mu_{3})
\end{eqnarray*}
 for $\mu_{1},\mu_{2}$ and $\mu_{3}\in M(G)$. Therefore convolution is associative. Now we define the bounded tri-linear mapping $$f:M(G)\times M(G)\times M(G)\longrightarrow M(G)$$
by $f(\mu_{1},\mu_{2},\mu_{3})=\int \psi d(\mu_{1}*\mu_{2}*\mu_{3})$. If $G$ is finite,  then $f$ is regular.
\end{enumerate}
\end{example}

\section{\textbf{Some results for regularity}}
Dales, Rodriguez-Palacios and Velasco in  \cite[Theorem 4.1]{8},  for a bonded bilinear map $m:X\times Y\longrightarrow Z$ have shown that, $m^{r*r***}=m^{***r*r}$ if and only if both $m$ and $m^{r*}$ are Arens regular. Now in the following we study it in general case.

\begin{remark}\label{3.1}
In the next theorem, $f^{n}$ is $n-$th adjoint of $f$ for each $n\in N$. 
\end{remark}

\begin{theorem}\label{3.2}
If $f$ and $f^{rn}$ are reular, then $f^{4rnr}=f^{rnr4}$.
\end{theorem}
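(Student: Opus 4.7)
The plan is to reduce the entire claim to a symbolic manipulation of the superscript word, leveraging only two structural identities: the flip $r$ is an involution on tri-linear maps, so any occurrence of $rr$ in a superscript collapses, and successive adjoints concatenate additively, so adjacent blocks of stars combine as $*^{a}*^{b}=*^{a+b}$. In this calculus, the regularity of $f$ reads $f^{****}=f^{r****r}$; setting $g=f^{rn}$ and writing out $g^{****}=g^{r****r}$, the regularity of $f^{rn}$ reads $f^{rn****}=f^{rnr****r}$.

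I would first simplify the left-hand side $f^{4rnr}=f^{****rnr}$. Replacing the leading block $f^{****}$ by $f^{r****r}$ using regularity of $f$ produces $f^{r****rrnr}$; the internal $rr$ cancels and the two adjacent star-blocks amalgamate, yielding $f^{r(4+n)r}$.

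For the right-hand side, I would post-compose the regularity identity $f^{rn****}=f^{rnr****r}$ by $r$, which gives $f^{rn****r}=f^{rnr****rr}=f^{rnr****}$. Reading this right-to-left then gives $f^{rnr4}=f^{rnr****}=f^{rn****r}=f^{r(n+4)r}$. Since $4+n=n+4$, the two reduced words coincide and the theorem is proved. The argument has no analytic content; the only subtlety is the bookkeeping—confirming that applying an operation to both sides of a map-valued equation is legitimate, and that the reductions $r^{2}=\mathrm{id}$ and $*^{a}*^{b}=*^{a+b}$ may be carried out uniformly in any position of the superscript word.
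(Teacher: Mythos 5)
Your proposal is correct and follows essentially the same route as the paper: from regularity of $f$ the left word reduces to $f^{r(4+n)r}$, and from regularity of $f^{rn}$ the right word reduces to $f^{r(n+4)r}$, using only $r^{2}=\mathrm{id}$, concatenation of adjoints, and the legitimacy of applying these operations to both sides of an identity between maps. The paper phrases the same computation with the trailing flip stripped off (it shows $f^{4rn}=f^{r(n+4)}=f^{rnr4r}$ and then flips once at the end), so the difference is purely cosmetic.
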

\begin{proof}
Since $f$ is regular, so $f^{4r}=f^{r4}$. Therefore $f^{4rn}=f^{r(n+4)}$. In the other hands, regularity of $f^{rn}$  follows that $f^{r(n+4)}=f^{rnr4r}$. Thus $f^{rnr4r}=f^{4rn}$ and this completes the proof.
\end{proof}

\begin{theorem}\label{3.3}
Let $f:X\times Y\times Z\longrightarrow W$ be a bounded tri-linear mapping. Then
\begin{enumerate}
\item $f^{****r**r}=f^{r**r****}$ if and only if both $f$ and $f^{r**}$  are regular.
\item $f^{****r***r}=f^{r***r****}$ if and only if both $f$ and $f^{r***}$  are regular.
\end{enumerate}
\end{theorem}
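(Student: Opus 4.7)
I will treat (1) and (2) by a common two-step template. The forward directions are immediate from Theorem \ref{3.2} applied with $n=2$ and $n=3$, respectively, since in each case both $f$ and the relevant $f^{rn}$ are regular. For each converse I will first extract regularity of $f$ from the assumed identity alone, and then combine $f$-regularity with the identity algebraically to conclude regularity of $f^{r**}$ (in part (1)) or $f^{r***}$ (in part (2)).

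To extract regularity of $f$ in the converse of (1), I evaluate the hypothesis $f^{****r**r}=f^{r**r****}$ at a triple $(z^{**},w^{*},x^{**})$, where $w^{*}\in W^{*}$ and $x^{**}\in X^{**}$ are viewed in $W^{***}$ and $X^{****}$ via the canonical embeddings, and pair the two resulting elements of $Y^{***}$ with an arbitrary $y^{**}\in Y^{**}$. Unwinding the chain of adjoints and flips, the left-hand side reduces to $\langle f^{****}(x^{**},y^{**},z^{**}),w^{*}\rangle$, which by Theorem \ref{2.1} equals $\lim_{\alpha}\lim_{\beta}\lim_{\gamma}\langle w^{*},f(x_{\alpha},y_{\beta},z_{\gamma})\rangle$ along any weak-$*$ convergent nets $x_{\alpha}\to x^{**}$, $y_{\beta}\to y^{**}$, $z_{\gamma}\to z^{**}$. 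A parallel unwinding of the right-hand side produces the reversed iteration $\lim_{\gamma}\lim_{\beta}\lim_{\alpha}\langle w^{*},f(x_{\alpha},y_{\beta},z_{\gamma})\rangle$, and the equality of these two iterated limits over every choice of nets and every $w^{*}$ is precisely the Theorem \ref{2.1} regularity criterion for $f$.

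With $f^{****}=f^{r****r}$ in hand, flipping yields $f^{****r}=f^{r****}$, and appending two further adjoints gives $f^{****r**}=f^{r******}$. Flipping the hypothesis itself once produces $f^{****r**}=f^{r**r****r}$, so combining the two identities gives $f^{r******}=f^{r**r****r}$, which is exactly $(f^{r**})^{****}=(f^{r**})^{r****r}$; hence $f^{r**}$ is regular. Part (2) follows by the same template: restrict the hypothesis to $(w^{*},x^{**},y^{**})$ paired with $z^{**}$ to recover $f$-regularity, and then append three adjoints in place of two in the algebraic step to conclude regularity of $f^{r***}$.

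The main obstacle is the slot-by-slot bookkeeping in the unfolding of $f^{****r**r}$ and $f^{r**r****}$: both are six-operation composites reaching two full biduals above the original spaces, so one must carefully track at every duality step which higher-dual slot is being paired with which embedded vector, since a single misaligned flip would change the order of an iterated weak-$*$ limit and break the identification with Theorem \ref{2.1}. Once both sides are rewritten as iterated scalar limits, the algebraic second step is a direct variant of the manipulation already carried out in the proof of Theorem \ref{3.2}.
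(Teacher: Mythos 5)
Your proposal is correct, and its overall architecture coincides with the paper's: the forward implications via Theorem \ref{3.2} with $n=2$ and $n=3$, and the converse split into first recovering regularity of $f$ from the hypothesis and then deducing regularity of $f^{r**}$ (resp.\ $f^{r***}$). In the first half of the converse you and the paper do essentially the same thing — evaluate the hypothesis at $(z^{**},w^{*},x^{**})$ with $w^{*}$ and $x^{**}$ canonically embedded and pair against $y^{**}$ — except that the paper unwinds with a single net $z_{\gamma}\to z^{**}$ and adjoint identities, while you reduce both sides to triple iterated limits and quote Theorem \ref{2.1}; when you do so for the flipped side, note that the $W^{*}$- and $X^{**}$-slots already lie in the domain spaces of $f^{r**r}$, so the corresponding approximating nets can be taken constant (a sentence you should add, but not a gap). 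Where you genuinely diverge is the second half: you get regularity of $f^{r**}$ purely formally, flipping $f^{****}=f^{r****r}$ to $f^{****r}=f^{r****}$, taking two adjoints to obtain $f^{****r**}=f^{r******}$, and combining with the flipped hypothesis $f^{****r**}=f^{r**r****r}$ to read off $(f^{r**})^{****}=(f^{r**})^{r****r}$; this chain checks out, including the identifications $f^{r******}=(f^{r**})^{****}$ and $f^{r**r****r}=(f^{r**})^{r****r}$. The paper establishes exactly the same identity pointwise, by a weak-$*$ net computation with $x^{**}_{\alpha}\to x^{****}$ in $X^{****}$, inserting the just-proved regularity of $f$ inside the limit. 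Both routes use the same two inputs (the hypothesis plus regularity of $f$), so neither is more general; the paper's computation is more explicit at the level of pairings, whereas your algebraic finish avoids nets at the fourth-dual level, is less vulnerable to the superscript bookkeeping you rightly identify as the main hazard, and transfers verbatim to part (2) by appending three adjoints instead of two — a case the paper dispatches only with ``the same argument.''
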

\begin{proof}
We prove only (1), the other part has the same argument. If  both $f$ and $f^{r**}$  are regular, then by applying Theorem \ref{3.2}, for $n=2$, $f^{****r**r}=f^{r**r****}$.

 Conversely, suppose that $f^{****r**r}=f^{r**r****}$. First we show that $f$ is regular. Let  $\{z_{\gamma} \}$ is net in $Z$  which converge to $z^{**}\in Z^{**}$  in the $w^{*}-$topologies. Then for every $x^{**}\in X^{**}, y^{**}\in Y^{**}$ and $w^{*}\in W^{*}$ we have
 \begin{eqnarray*}
&&\langle f^{****}(x^{**},y^{**},z^{**}),w^{*}\rangle=\langle f^{****r}(z^{**},y^{**},x^{**}),w^{*}\rangle\\
&&=\langle f^{****r**r}(z^{**},w^{*},x^{**}),y^{**}\rangle=\langle f^{r**r****}(z^{**},w^{*},x^{**}),y^{**}\rangle\\
&&=\lim\limits_\gamma\langle y^{**},f^{r**r}(z_{\gamma},w^{*},x^{**})\rangle=\langle f^{r****r}(x^{**},y^{**},z^{**}),w^{*}\rangle.
\end{eqnarray*}
Therefore $f$ is regular. Now we show that $f^{r**}$ is regular. Let $\{x_{\alpha}^{**} \}$ be net in $X^{**}$  which converge to $x^{****}\in X^{****}$  in the $w^{*}-$topologies. Then for every $y^{**}\in Y^{**}, z^{**}\in Z^{**}$ and $w^{***}\in W^{***}$ we have
 \begin{eqnarray*}
&&\langle f^{r**r****r}(x^{****},w^{***},z^{**}),y^{**}\rangle =\langle f^{r**r****}(z^{**},w^{***},x^{****}),y^{**}\rangle\\
&&=\langle f^{****r**r}(z^{**},w^{***},x^{****}),y^{**}\rangle=\lim\limits_\alpha\langle w^{***},f^{****}(x_{\alpha}^{**},y^{**},z^{**})\rangle\\
&&=\lim\limits_\alpha\langle w^{***},f^{r****r}(x_{\alpha}^{**},y^{**},z^{**})\rangle=\lim\limits_\alpha\langle w^{***},f^{r****}(z^{**},y^{**},x_{\alpha}^{**})\rangle\\
&&=\langle  f^{r******}(x^{****},w^{***},z^{**}),y^{**}\rangle.
\end{eqnarray*}
It follows that $f^{r**}$ is regular and this completes the proof.
\end{proof}

Arens has shown \cite{arens} that a bounded bilinear map $m$ is regular if and only if for each $z^{*}\in Z^{*}$, the bilinear form $z^{*}om$ is regular. In the next theorem we give an important characterization of  regularity bounded tri-linear mappings.
\begin{lemma}\label{3.4}
Suppose $X, ~Y,~ Z, ~W$ and $S$  are normed spaces and $f:X\times Y\times Z\longrightarrow W $  and $h:W\longrightarrow S$ are bounded tri-linear mapping and bounded linear mapping, respectively. Then we have
\begin{enumerate}
\item $h^{**}of^{****}=(hof)^{****}$.
\item $h^{**}of^{r****r}=(hof)^{r****r}$.
\end{enumerate}
\end{lemma}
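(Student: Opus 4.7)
My plan is to prove both identities by unwrapping them against an arbitrary functional $s^{*}\in S^{*}$ and using Theorem \ref{2.1}, which represents the fourth adjoint of a bounded tri-linear map as an iterated weak-$*$ limit. The key algebraic fact is the defining relation $\langle h^{**}(w^{**}), s^{*}\rangle = \langle w^{**}, h^{*}(s^{*})\rangle$ for every $w^{**}\in W^{**}$ and $s^{*}\in S^{*}$, which lets us move $h^{**}$ off the outside and replace it by $h^{*}$ on the other side of the pairing. I then expect everything to collapse by a routine limit chase.

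For part (1), fix $x^{**}\in X^{**},\, y^{**}\in Y^{**},\, z^{**}\in Z^{**}$ and choose nets $\{x_{\alpha}\}\subseteq X$, $\{y_{\beta}\}\subseteq Y$, $\{z_{\gamma}\}\subseteq Z$ converging to them in the respective weak-$*$ topologies. For $s^{*}\in S^{*}$, I would compute
\begin{eqnarray*}
\langle h^{**}(f^{****}(x^{**},y^{**},z^{**})), s^{*}\rangle
&=& \langle f^{****}(x^{**},y^{**},z^{**}), h^{*}(s^{*})\rangle\\
&=& \lim_{\alpha}\lim_{\beta}\lim_{\gamma} \langle f(x_{\alpha},y_{\beta},z_{\gamma}), h^{*}(s^{*})\rangle\\
&=& \lim_{\alpha}\lim_{\beta}\lim_{\gamma} \langle h(f(x_{\alpha},y_{\beta},z_{\gamma})), s^{*}\rangle\\
&=& \lim_{\alpha}\lim_{\beta}\lim_{\gamma} \langle (h\circ f)(x_{\alpha},y_{\beta},z_{\gamma}), s^{*}\rangle\\
&=& \langle (h\circ f)^{****}(x^{**},y^{**},z^{**}), s^{*}\rangle,
\end{eqnarray*}
where the second and last equalities are applications of Theorem \ref{2.1} to $f$ and to the bounded tri-linear map $h\circ f:X\times Y\times Z\longrightarrow S$, respectively. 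Since $s^{*}$ was arbitrary, part (1) follows.

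For part (2), I observe that flipping and composing with $h$ commute, namely $(h\circ f)^{r}=h\circ f^{r}$, because both sides send $(z,y,x)$ to $h(f(x,y,z))$. Applying part (1) to the bounded tri-linear map $f^{r}:Z\times Y\times X\longrightarrow W$ therefore yields $h^{**}\circ (f^{r})^{****}=(h\circ f^{r})^{****}=((h\circ f)^{r})^{****}$. Composing with the outer flip on both sides and using that $r$ merely swaps the first and third arguments gives $h^{**}\circ f^{r****r}=(h\circ f)^{r****r}$. No step is really a serious obstacle here; the only thing to watch is the bookkeeping of adjoints and flips, and the fact that the iterated limits from Theorem \ref{2.1} may be interchanged freely with the continuous functional $h^{*}(s^{*})$ since $h^{*}$ is a fixed element of $W^{*}$.
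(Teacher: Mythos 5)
Your proposal is correct and follows essentially the same route as the paper: part (1) is the identical computation, moving $h^{**}$ across the pairing via $h^{*}(s^{*})$ and using the iterated weak-$*$ limit representation of the fourth adjoints of $f$ and $h\circ f$. For part (2) you deduce the identity by applying (1) to $f^{r}$ together with $(h\circ f)^{r}=h\circ f^{r}$, which is just a tidy formalization of the ``similar argument'' the paper invokes, so there is no substantive difference.
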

\begin{proof}
Let $\{x_{\alpha} \}, \{y_{\beta} \}$ and $\{z_{\gamma} \}$ be nets in $X, Y$ and $Z$  which converge to $x^{**}\in X^{**},y^{**}\in Y^{**}$ and $z^{**}\in Z^{**}$  in the $w^{*}-$topologies, respectively. For each $s^{*}\in S^{*}$ we have
\begin{eqnarray*}
&&\langle h^{**}of^{****}(x^{**},y^{**},z^{**}),s^{*}\rangle=\langle h^{**}(f^{****}(x^{**},y^{**},z^{**})),s^{*}\rangle\\
 &&=\langle f^{****}(x^{**},y^{**},z^{**}),h^{*}(s^{*})\rangle=\lim\limits_\alpha \lim\limits_\beta \lim\limits_\gamma\langle
 h^{*}(s^{*}),f(x_{\alpha},y_{\beta},z_{\gamma})\rangle\\
&&=\lim\limits_\alpha \lim\limits_\beta \lim\limits_\gamma\langle s^{*},h(f(x_{\alpha},y_{\beta},z_{\gamma})) \rangle=\lim\limits_\alpha \lim\limits_\beta \lim\limits_\gamma\langle s^{*},hof(x_{\alpha},y_{\beta},z_{\gamma})\rangle\\
&&=\langle (hof)^{****}(x^{**},y^{**},z^{**}),s^{*}\rangle.
\end{eqnarray*}
Hence $h^{**}of^{****}(x^{**},y^{**},z^{**})=(hof)^{****}(x^{**},y^{**},z^{**})$. A similar argument applies for (2).
\end{proof}
\begin{theorem}\label{3.5}
Let $f:X\times Y\times Z\longrightarrow W $ and $h:W\longrightarrow S$ be bounded tri-linear mapping and bounded linear mapping, respectively. Then $f$ is regular if and only if $hof$ is regular.
\end{theorem}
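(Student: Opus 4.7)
The plan is to deduce the theorem almost entirely from Lemma \ref{3.4}, which identifies $(hof)^{****}$ with $h^{**}of^{****}$ and $(hof)^{r****r}$ with $h^{**}of^{r****r}$.

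For the forward direction, I would assume $f^{****}=f^{r****r}$ and compose both sides on the left with the single operator $h^{**}$. Applying parts (1) and (2) of Lemma \ref{3.4} then gives $(hof)^{****}=(hof)^{r****r}$, so $hof$ is regular. This step is a routine two-line computation with no substantive content beyond the lemma.

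For the converse, I would start from $(hof)^{****}=(hof)^{r****r}$ and run Lemma \ref{3.4} in the opposite direction, yielding
\begin{align*}
h^{**}\bigl(f^{****}(x^{**},y^{**},z^{**})\bigr)=h^{**}\bigl(f^{r****r}(x^{**},y^{**},z^{**})\bigr)
\end{align*}
for every $x^{**}\in X^{**}$, $y^{**}\in Y^{**}$, $z^{**}\in Z^{**}$. Pairing with an arbitrary $s^{*}\in S^{*}$ and moving $h^{*}$ across produces
\begin{align*}
\langle f^{****}(x^{**},y^{**},z^{**})-f^{r****r}(x^{**},y^{**},z^{**}),\, h^{*}(s^{*})\rangle = 0.
\end{align*}
The remaining task is to upgrade this vanishing on $h^{*}(S^{*})$ to vanishing on all of $W^{*}$, at which point $f^{****}=f^{r****r}$ and $f$ is regular. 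The natural route, in the spirit of the Arens-style criterion quoted immediately before Lemma \ref{3.4}, is to let the target $S$ be the scalar field and let $h$ range over all $w^{*}\in W^{*}$; the hypothesis then forces $f^{****}-f^{r****r}$ to annihilate every $w^{*}\in W^{*}$, and Hahn–Banach closes the argument.

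I expect the main obstacle to be precisely this last passage, since for a single fixed $h$ the equality $h^{**}\circ f^{****}=h^{**}\circ f^{r****r}$ does not in general imply $f^{****}=f^{r****r}$ unless $h^{**}$ (equivalently $h$) is one-to-one. The converse therefore genuinely relies on the hypothesis being available for a separating family of linear targets, or equivalently on reducing to scalar-valued targets; once that reduction is made, both implications become formal consequences of Lemma \ref{3.4}.
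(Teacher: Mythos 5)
Your forward direction is correct and coincides with the paper's argument: compose $f^{****}=f^{r****r}$ with $h^{**}$ and apply both parts of Lemma \ref{3.4}.

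The converse, however, is not actually proved, and you have correctly identified the precise obstruction: from $(h\circ f)^{****}=(h\circ f)^{r****r}$ and Lemma \ref{3.4} one only obtains $h^{**}\circ f^{****}=h^{**}\circ f^{r****r}$, and for a single fixed $h$ this gives $f^{****}=f^{r****r}$ only when $h^{**}$ is injective (equivalently, when $h^{*}(S^{*})$ is norm dense in $W^{*}$; this holds, for instance, if $h$ is an isomorphism onto its range). The theorem as stated fixes one $h$, so your proposed repair --- taking $S$ to be the scalar field and letting $h$ range over all of $W^{*}$ --- proves a different statement, namely the tri-linear analogue of Arens' criterion that $f$ is regular if and only if $w^{*}\circ f$ is regular for every $w^{*}\in W^{*}$; it does not prove Theorem \ref{3.5}. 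Indeed, for $h=0$ the map $h\circ f$ is trivially regular while $f$ need not be, so some injectivity-type hypothesis on $h$ is indispensable and no argument can avoid it. For what it is worth, the paper's own proof of the converse has exactly the defect you isolated: it pairs $f(x_{\alpha},y_{\beta},z_{\gamma})$ against $h$ as though $h$ were an element of $W^{*}$ (tacitly making $S$ the scalar field), and it then asserts that $f^{****}(x^{**},y^{**},z^{**})\neq f^{r****r}(x^{**},y^{**},z^{**})$ forces the two pairings with that particular $h$ to differ, which is unjustified, since two distinct elements of $W^{**}$ can agree on a given functional. So your diagnosis is sound, but to close the converse you must either add a hypothesis guaranteeing that $h^{**}$ is injective (in which case your one-line composition argument finishes the proof) or reformulate the statement so that $h$ runs over a separating family of scalar-valued maps.
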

\begin{proof}
Assume that $f$ is regular. Then for every $x^{**}\in X^{**},y^{**}\in Y^{**},z^{**}\in Z^{**}$ and $s^{*}\in S^{*}$ we have
\begin{eqnarray*}
&&\langle h^{**}(f^{r****r}(x^{**},y^{**},z^{**})),s^{*}\rangle=\langle f^{r****r}(x^{**},y^{**},z^{**}),h^{*}(s^{*})\rangle\\
&&=\langle f^{****}(x^{**},y^{**},z^{**}),h^{*}(s^{*})\rangle=\langle h^{**}(f^{****}(x^{**},y^{**},z^{**})),s^{*}\rangle.
\end{eqnarray*}
Therefore $h^{**}of^{r****r}(x^{**},y^{**},z^{**})=h^{**}of^{****}(x^{**},y^{**},z^{**})$ and by applying Lemma \ref{3.4}, we implies that
 $$(hof)^{r****r}(x^{**},y^{**},z^{**})=(hof)^{****}(x^{**},y^{**},z^{**}).$$
It follows that $hof$ is regular.  

For the converse, suppose that $hof$ is regular. By contradiction, let $f$ be not regular. Thus there  exist  $x^{**}\in X^{**},y^{**}\in Y^{**}$ and $z^{**}\in Z^{**}$  such  that $f^{****}(x^{**},y^{**},z^{**})\neq f^{r****r}(x^{**},y^{**},z^{**})$. Therefore we have 
\begin{eqnarray*}
&&(hof)^{****}(x^{**},y^{**},z^{**})=w^{*}-\lim\limits_\alpha w^{*}-\lim\limits_\beta w^{*}-\lim\limits_\gamma (hof)(x_{\alpha},y_{\beta},z_{\gamma})\\
&&=\lim\limits_\alpha \lim\limits_\beta \lim\limits_\gamma\langle f(x_{\alpha},y_{\beta},z_{\gamma}),h\rangle=\langle f^{****}(x^{**},y^{**},z^{**}),h\rangle\\
&&\neq  \langle f^{r****r}(x^{**},y^{**},z^{**}),h\rangle=\lim\limits_\gamma \lim\limits_\beta \lim\limits_\alpha\langle f(x_{\alpha},y_{\beta},z_{\gamma}),h\rangle\\
&&=w^{*}-\lim\limits_\gamma w^{*}-\lim\limits_\beta w^{*}-\lim\limits_\alpha (hof)(x_{\alpha},y_{\beta},z_{\gamma})\\
&&=(hof)^{r****r}(x^{**},y^{**},z^{**}).
\end{eqnarray*}
It follows that  $(hof)^{****}(x^{**},y^{**},z^{**})\neq (hof)^{r****r}(x^{**},y^{**},z^{**})$.  
\end{proof}
Another interesting case of regularity is in the following.
\begin{theorem}\label{3.6}
Let $f:X\times Y\times Z\longrightarrow W $ be a bounded tri-linear mapping, $m:X\times Y\longrightarrow Z$ be a bounded bilinear mapping, $T:X\times Y\longrightarrow W$ defined by $T(x,y)=f(x,y,m(x,y))$  for each $x\in X, y\in Y$ and $m^{***}$ is factors. Then $T$ is regular if and only if $f$ is regular.
\end{theorem}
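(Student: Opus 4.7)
The plan is to convert both regularity statements into equalities of iterated weak-star limits (using Theorem~\ref{2.1} and its bilinear analogue) and to bridge the two-fold limit for $T$ with the three-fold limit for $f$ via the Arens extension of $m$.

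For nets $x_\alpha\to x^{**}$ in $X^{**}$ and $y_\beta\to y^{**}$ in $Y^{**}$, I would first record the formula
\[
T^{***}(x^{**},y^{**}) = w^*\text{-}\lim_\alpha w^*\text{-}\lim_\beta f\bigl(x_\alpha, y_\beta, m(x_\alpha, y_\beta)\bigr),
\]
and its $r$-flipped counterpart for $T^{r***r}$, obtained by the same unwinding used in the proof of Theorem~\ref{2.1}. The essential technical step is to decouple the diagonal occurrence of $(\alpha,\beta)$ in $T(x_\alpha,y_\beta) = f(x_\alpha, y_\beta, m(x_\alpha, y_\beta))$. Introducing auxiliary nets $u_{\alpha'}\to u^{**}$ and $v_{\beta'}\to v^{**}$ to separate the inner $m$-arguments from the outer $f$-arguments, and using separate weak-star continuity of the Arens extensions of both $f$ and $m$, I would show that the iterated limit defining $T^{***}$ rewrites as a three-fold iterated limit of $f(x_\alpha, y_\beta, z_\gamma)$ with $z_\gamma\to z^{**}$ in $w^*$, where $z^{**} = m^{\sharp}(x^{**},y^{**})$ for a suitable Arens extension $m^{\sharp}$ of $m$.

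For the forward direction, regularity of $f$ immediately gives order-independence of the three-fold iterated limit, hence of the two-fold one, and so $T^{***} = T^{r***r}$. For the converse, the same identification expresses $T^{***} = T^{r***r}$ as the equality of $f^{****}$ and $f^{r****r}$ on triples $(x^{**},y^{**},m^{\sharp}(x^{**},y^{**}))$; since $m^{***}$ factors, further decoupling allowing the interior $m$-arguments to range over independent nets $u_{\alpha'}\to u^{**}$, $v_{\beta'}\to v^{**}$ lets me realize any $z^{**}\in Z^{**}$ as $m^{\sharp}(u^{**},v^{**})$, so the equality extends to arbitrary triples $(x^{**},y^{**},z^{**})$, giving $f$ regular.

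The main obstacle is the diagonal coupling in $T(x_\alpha,y_\beta) = f(x_\alpha, y_\beta, m(x_\alpha, y_\beta))$: the same index pair appears simultaneously in two different slots of the composition, and disentangling this into independent iterated weak-star limits is the most delicate step, resting on separate weak-star continuity of the Arens extensions of $f$ and $m$. The surjectivity of $m^{***}$ is precisely what allows the converse direction to conclude regularity of $f$ on all of $X^{**}\times Y^{**}\times Z^{**}$, rather than only on the diagonal image of $m^{\sharp}$.
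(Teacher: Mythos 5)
Your outline follows the paper's own route: identify the extensions of $T$ with $f^{****}\bigl(x^{**},y^{**},m^{***}(x^{**},y^{**})\bigr)$ and its flipped counterpart, use regularity of $f$ in one direction, and invoke the surjectivity of $m^{***}$ in the other. The difficulty is that the bridge identity this rests on is exactly the step you defer, and the tool you name cannot deliver it. The Arens extensions $m^{***}$ and $f^{****}$ are $w^{*}$-continuous only in their first variable (and in a later variable only when the preceding entries are taken in the original spaces); they are not separately $w^{*}$-continuous in every slot --- that failure is precisely what regularity measures --- and even full separate continuity would not turn the diagonal limit $\lim_{\alpha}\lim_{\beta}\langle w^{*},f(x_{\alpha},y_{\beta},m(x_{\alpha},y_{\beta}))\rangle$ into an iterated limit with an independent net in the third slot. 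Concretely, for fixed $x_{\alpha}$ the function $y\mapsto\langle w^{*},f(x_{\alpha},y,m(x_{\alpha},y))\rangle$ is quadratic rather than linear in $y$, so its limit along $y_{\beta}\to y^{**}$ need not exist, let alone equal $\langle w^{*},f^{****}(x_{\alpha},y^{**},m^{***}(x_{\alpha},y^{**}))\rangle$. (The paper makes the same move silently, writing $\lim_{\alpha}\lim_{\beta}\lim_{\alpha}\lim_{\beta}$ with the indices reused, so you have matched its outline; but as a proof the decoupling is missing in both, and note also that $T(x,y)=f(x,y,m(x,y))$ is not even bilinear in $(x,y)$, so $T^{***}$ and $T^{r***r}$ already require interpretation.)

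There is a second gap in your converse. Even granting the bridge identity, regularity of $T$ only yields $f^{****}\bigl(x^{**},y^{**},m^{***}(x^{**},y^{**})\bigr)=f^{r****r}\bigl(x^{**},y^{**},m^{***}(x^{**},y^{**})\bigr)$, i.e.\ equality on the graph of $m^{***}$. Your proposal to let the inner $m$-arguments run over independent nets $u_{\alpha'}\to u^{**}$, $v_{\beta'}\to v^{**}$ is not available: $T$ never evaluates $f$ at decoupled arguments, so no manipulation of $T$ produces $f^{****}\bigl(x^{**},y^{**},m^{***}(u^{**},v^{**})\bigr)$ with $(u^{**},v^{**})\neq(x^{**},y^{**})$. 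Surjectivity of $m^{***}$ gives, for each $z^{**}$, \emph{some} pair with $m^{***}(x^{**},y^{**})=z^{**}$, but the outer arguments are then forced to be that same pair, so you obtain the equality of $f^{****}$ and $f^{r****r}$ only at triples of the special form $\bigl(x^{**},y^{**},m^{***}(x^{**},y^{**})\bigr)$, not at an arbitrary $(x^{**},y^{**},z^{**})$ as regularity of $f$ demands. This is again the same leap the published proof takes, but it is a leap; closing it needs either extra hypotheses permitting the inner and outer arguments to vary independently, or a genuinely different argument.
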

\begin{proof}
Let  $T$ be regular. Since the mapping $m^{***}:X^{**}\times Y^{**}\longrightarrow Z^{**}$ is onto, so for each $z^{**}\in Z^{**}$ there  exist  $x^{**}\in X^{**}$ and $y^{**}\in Y^{**}$ such  that $m^{***}(x^{**},y^{**})=z^{**}$. Suppose that $\{x_{\alpha} \}$ and $\{y_{\beta} \}$ are nets in $X$ and $Y$ which converge to $x^{**}$ and $y^{**}$ in the $w^{*}-$topologies, respectively. For every $w^{*}\in W^{*}$ we have 
\begin{eqnarray*}
&&\langle f^{****}(x^{**},y^{**},z^{**}),w^{*}\rangle=\langle  f^{****}(x^{**},y^{**},m^{***}(x^{**},y^{**})),w^{*}\rangle\\
&&=\lim\limits_{\alpha}\lim\limits_{\beta}\lim\limits_{\alpha}\lim\limits_{\beta}\langle w^{*},f(x_{\alpha},y_{\beta},m(x_{\alpha},y_{\beta}))\rangle=\lim\limits_{\alpha}\lim\limits_{\beta}\lim\limits_{\alpha}\lim\limits_{\beta}\langle w^{*},T(x_{\alpha},y_{\beta})\rangle\\
&&=\lim\limits_{\alpha}\lim\limits_{\beta}\langle T^{***}(x^{**},y^{**}),w^{*}\rangle=\lim\limits_{\alpha}\lim\limits_{\beta}\langle T^{r***r}(x^{**},y^{**}),w^{*}\rangle\\
&&=\lim\limits_{\alpha}\lim\limits_{\beta}\lim\limits_{\beta}\lim\limits_{\alpha}\langle w^{*},T(x_{\alpha},y_{\beta})\rangle=\lim\limits_{\alpha}\lim\limits_{\beta}\lim\limits_{\beta}\lim\limits_{\alpha}\langle w^{*},f(x_{\alpha},y_{\beta},m(x_{\alpha},y_{\beta}))\rangle\\
&&=\langle f^{r****r}(x^{**},y^{**},m^{***}(x^{**},y^{**})),w^{*}\rangle=\langle f^{r****r}(x^{**},y^{**},z^{**}),w^{*}\rangle.
\end{eqnarray*}
It follows that $f$ is regular. 

For the converse, suppose that $f$ is regular. For every $w^{*}\in W^{*}$ we have
\begin{eqnarray*}
&&\langle T^{***}(x^{**},y^{**}),w^{*}\rangle=\lim\limits_{\alpha}\lim\limits_{\beta}\langle w^{*},T(x_{\alpha},y_{\beta})\rangle\\
&&=\lim\limits_{\alpha}\lim\limits_{\beta}\langle w^{*},f(x_{\alpha},y_{\beta},m(x_{\alpha},y_{\beta}))\rangle=\langle f^{****}(x^{**},y^{**},m^{***}(x_{\alpha},y_{\beta})),w^{*}\rangle\\
&&=\langle f^{r****r}(x^{**},y^{**},m^{***}(x_{\alpha},y_{\beta})),w^{*}\rangle=\lim\limits_{\beta}\lim\limits_\alpha\langle w^{*},f(x_{\alpha},y_{\beta},m(x_{\alpha},y_{\beta}))\rangle\\
&&=\lim\limits_{\beta}\lim\limits_\alpha\langle w^{*},T(x_{\alpha},y_{\beta})\rangle=\langle T^{r***r}(x^{**},y^{**}),w^{*}\rangle.
\end{eqnarray*}
Therefore $T^{***}=T^{r***r}$ , as claimed.
\end{proof}
\begin{theorem}\label{3.7}
Let $X, Y, Z, W$ and $S$ be Banach spaces, $f:X\times Y\times Z\longrightarrow W $ be a bounded tri-linear mapping and $x\in X,y\in Y, z\in Z$. Then
\begin{enumerate}
\item Let $g_{1}:S\times Y\times Z\longrightarrow W $ be a bounded tri-linear mapping and let $h_{1}:X\longrightarrow S$ be a bounded linear mapping such that $f(x,y,z)=g_{1}(h_{1}(x),y,z)$. If $h_{1}$ is weakly compact, then $f^{****r*}(W^{***},Z^{**},Y^{**})\subseteq X^{*}$.

\item Let $g_{2}:X\times S\times Z\longrightarrow W $ be a bounded tri-linear mapping and let $h_{2}:Y\longrightarrow S$ be a bounded linear mapping such that $f(x,y,z)=g_{2}(x,h_{2}(y),z)$. If $h_{2}$ is weakly compact, then $f^{***r*}(X^{**},W^{*},Z^{**})\subseteq Y^{*}$.

\item Let $g_{3}:X\times Y\times S\longrightarrow W $ be a bounded tri-linear mapping and let $h_{3}:Z\longrightarrow S$ be a bounded linear mapping such that $f(x,y,z)=g_{3}(x,y,h_{3}(z))$. If $h_{3}$ is weakly compact, then $f^{*****}(W^{***},X^{**},Y^{**})\subseteq Z^{*}$.
\end{enumerate}
\end{theorem}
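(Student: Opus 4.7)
The idea is to push the weak compactness of $h_{i}$ through the chain of adjoints by exploiting the factorization $f=g_{i}(\dots,h_{i}(\cdot),\dots)$, and then conclude with Gantmacher's theorem in the form: a bounded linear map $h\colon X\to S$ is weakly compact iff $h^{**}(X^{**})\subseteq S$, equivalently $h^{***}(S^{***})\subseteq X^{*}$.

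First I would handle part (1). Starting from $f(x,y,z)=g_{1}(h_{1}(x),y,z)$ and iterating the definitions of the adjoints listed in the introduction, a routine computation should give
\[
f^{*}(w^{*},x,y)=g_{1}^{*}(w^{*},h_{1}(x),y),\qquad f^{**}(z^{**},w^{*},x)=g_{1}^{**}(z^{**},w^{*},h_{1}(x)),
\]
and then (since the third adjoint moves $h_{1}$ into the integrand slot)
\[
f^{***}(y^{**},z^{**},w^{*})=h_{1}^{*}\!\bigl(g_{1}^{***}(y^{**},z^{**},w^{*})\bigr),
\]
\[
f^{****}(x^{**},y^{**},z^{**})=g_{1}^{****}\!\bigl(h_{1}^{**}(x^{**}),\,y^{**},\,z^{**}\bigr).
\]
Applying the flip $r$ and one further star (as demanded by the target $f^{****r*}$) and rewriting $g_{1}^{****r}$ with $h_{1}^{**}(x^{**})$ in its last slot, I would arrive at
\[
f^{****r*}(w^{***},z^{**},y^{**})=h_{1}^{***}\!\bigl(g_{1}^{****r*}(w^{***},z^{**},y^{**})\bigr).
\]
By Gantmacher, weak compactness of $h_{1}$ gives $h_{1}^{**}(X^{**})\subseteq S$; since a $w^{*}$-null net in $X^{**}$ is then sent by $h_{1}^{**}$ to a weakly null net in $S$, composition with any $s^{***}\in S^{***}$ is $w^{*}$-continuous on $X^{**}$, so $h_{1}^{***}(S^{***})\subseteq X^{*}$. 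Inserting this in the previous display yields $f^{****r*}(W^{***},Z^{**},Y^{**})\subseteq X^{*}$.

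Parts (2) and (3) I would treat in exactly the same way. The factorization $f(x,y,z)=g_{2}(x,h_{2}(y),z)$ (resp.\ $g_{3}(x,y,h_{3}(z))$) propagates $h_{2}^{**}$ into the middle slot (resp.\ $h_{3}^{**}$ into the last slot) of the fourth adjoint of $g_{i}$. The particular adjoints named in the statement are precisely the ones which pair the ``slotted'' variable against $h_{i}^{**}$, and the analogous bookkeeping produces
\[
f^{***r*}(x^{**},w^{*},z^{**})=h_{2}^{***}\!\bigl(g_{2}^{***r*}(x^{**},w^{*},z^{**})\bigr),
\]
\[
f^{*****}(w^{***},x^{**},y^{**})=h_{3}^{***}\!\bigl(g_{3}^{*****}(w^{***},x^{**},y^{**})\bigr).
\]
The same Gantmacher argument then closes each part.

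The main obstacle I anticipate is purely the bookkeeping of stars and flips: one has to track which slot of the fourth adjoint of $g_{i}$ carries $h_{i}^{**}$ and then choose the extra flip/adjoint so that the correct variable is paired with $h_{i}^{**}$ to extract a factor of $h_{i}^{***}$. This is why the three parts require three genuinely different outer adjoints ($f^{****r*}$, $f^{***r*}$, and $f^{*****}$) rather than the ``same'' one; once the slot alignment is correct, each case reduces to a single application of Gantmacher's theorem.
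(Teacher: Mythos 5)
Your proposal is correct and follows essentially the same route as the paper: the same chain of adjoint identities leading to $f^{****r*}=h_{1}^{***}\circ g_{1}^{****r*}$ (and its analogues $f^{***r*}=h_{2}^{***}\circ g_{2}^{***r*}$, $f^{*****}=h_{3}^{***}\circ g_{3}^{*****}$), followed by the weak-compactness (Gantmacher) conclusion $h_{i}^{***}(S^{***})\subseteq$ the relevant predual. The only cosmetic difference is that the paper phrases the last step as ``$h_{1}$ weakly compact $\Rightarrow$ $h_{1}^{*}$ weakly compact $\Rightarrow$ $h_{1}^{***}(S^{***})\subseteq X^{*}$,'' which is equivalent to your $h_{1}^{**}(X^{**})\subseteq S$ argument.
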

\begin{proof}
We prove only (1), the other parts have the same argument. For every $x\in X,y\in Y, z\in Z$ and $w^{*}\in W^{*}$ we have 
$$\langle f^{*}(w^{*},x,y),z \rangle=\langle w^{*},f(x,y,z)\rangle=\langle w^{*},g_{1}(h_{1}(x),y,z)\rangle=\langle g_{1}^{*}(w^{*},h_{1}(x),y),z\rangle.$$
Therefore $f^{*}(w^{*},x,y)=g_{1}^{*}(w^{*},h_{1}(x),y)$, and implies that  for every $z^{**}\in Z^{**}$,
\begin{eqnarray*}
&&\langle f^{**}(z^{**},w^{*},x),y \rangle=\langle z^{**},f^{*}(w^{*},x,y)\rangle\\
&&=\langle z^{**},g_{1}^{*}(w^{*},h_{1}(x),y)\rangle=\langle g_{1}^{**}(z^{**},w^{*},h_{1}(x)),y\rangle.
\end{eqnarray*}
So $f^{**}(z^{**},w^{*},x)=g_{1}^{**}(z^{**},w^{*},h_{1}(x))$ and implies that  for every $y^{**}\in Y^{**}$,
\begin{eqnarray*}
&&\langle f^{***}(y^{**},z^{**},w^{*}),x \rangle=\langle y^{**}, f^{**}(z^{**},w^{*},x)\rangle=\langle y^{**},g_{1}^{**}(z^{**},w^{*},h_{1}(x))\rangle\\
&&=\langle g_{1}^{***}(y^{**},z^{**},w^{*}),h_{1}(x)\rangle=\langle h_{1}^{*}(g_{1}^{***}(y^{**},z^{**},w^{*})),x\rangle.
\end{eqnarray*}
Thus $f^{***}(y^{**},z^{**},w^{*})=h_{1}^{*}(g_{1}^{***}(y^{**},z^{**},w^{*}))$ and implies that  for every $x^{**}\in X^{**}$,
\begin{eqnarray*}
&&\langle f^{****}(x^{**},y^{**},z^{**}),w^{*} \rangle=\langle x^{**}, f^{***}(y^{**},z^{**},w^{*})\rangle \\ 
&&=\langle x^{**},h_{1}^{*}(g_{1}^{***}(y^{**},z^{**},w^{*}))\rangle=\langle h_{1}^{**}(x^{**}),(g_{1}^{***}(y^{**},z^{**},w^{*})\rangle \\
&&=\langle g_{1}^{****}(h_{1}^{**}(x^{**}),y^{**},z^{**}),w^{*}\rangle.
\end{eqnarray*}
Therefore for every $w^{***}\in W^{***}$ we have 
\begin{eqnarray*}
&&\langle f^{****r*}(w^{***},z^{**},y^{**}),x^{**} \rangle =\langle w^{***}, f^{****r}(z^{**},y^{**},x^{**})\rangle\\
&&=\langle w^{***}, f^{****}(x^{**},y^{**},z^{**})\rangle=\langle w^{***}, g_{1}^{****}(h_{1}^{**}(x^{**}),y^{**},z^{**})\rangle\\
&&=\langle w^{***}, g_{1}^{****r}(z^{**},y^{**},h_{1}^{**}(x^{**}))\rangle=\langle g_{1}^{****r*}(w^{***},z^{**},y^{**}),h_{1}^{**}(x^{**})\rangle \\
&&=\langle h_{1}^{***}(g_{1}^{****r*}(w^{***},z^{**},y^{**})),x^{**}\rangle.
\end{eqnarray*}
Therefore $f^{****r*}(w^{***},z^{**},y^{**})=h_{1}^{***}(g_{1}^{****r*}(w^{***},z^{**},y^{**}))$. The weak compactness of $h_{1}$ implies that of $h_{1}^{*}$, from which we have $h_{1}^{***}(S^{***})\subseteq X^{*}$. Thus $h_{1}^{***}(g_{1}^{****r*}(w^{***},z^{**},y^{**})) \in X^{*}$ and this completes the proof.
\end{proof}
This theorem, combined with Theorem \ref{2.2}, yields the next result.

\begin{corollary}\label{3.8}
With the assumptions Theorem \ref{3.7}, if $h_{2}$ and $h_{3}$ are weakly compact, then $f$ is regular.
\end{corollary}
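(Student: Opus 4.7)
The plan is to read off the conclusion directly from the two containments supplied by parts (2) and (3) of Theorem \ref{3.7} and then feed them into the equivalence (3) $\Leftrightarrow$ (1) of Theorem \ref{2.2}. No further calculation should be needed; the corollary is essentially a packaging lemma.

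First I would invoke Theorem \ref{3.7}(2): since $h_{2}$ is weakly compact, we get the inclusion $f^{***r*}(X^{**},W^{*},Z^{**})\subseteq Y^{*}$. In particular, restricting the third slot from $Z^{**}$ to $Z$ (identified with its canonical image in $Z^{**}$), we obtain $f^{***r*}(X^{**},W^{*},Z)\subseteq Y^{*}$, which is exactly the first half of condition (3) in Theorem \ref{2.2}.

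Next I would invoke Theorem \ref{3.7}(3): since $h_{3}$ is weakly compact, we have $f^{*****}(W^{***},X^{**},Y^{**})\subseteq Z^{*}$. Restricting the first slot from $W^{***}$ down to $W^{*}$ via the canonical embedding yields $f^{*****}(W^{*},X^{**},Y^{**})\subseteq Z^{*}$, which is the second half of condition (3) in Theorem \ref{2.2}.

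With both halves of Theorem \ref{2.2}(3) verified, the implication (3) $\Rightarrow$ (1) of that theorem delivers the regularity of $f$, completing the proof. I do not expect any real obstacle here: the only small point to justify, if desired, is the legitimacy of the two restrictions, which follow because $Z \hookrightarrow Z^{**}$ and $W^{*}\hookrightarrow W^{***}$ are isometric linear embeddings and the adjoint maps $f^{***r*}$ and $f^{*****}$ are multilinear, so the image over the smaller domain sits inside the image over the larger one.
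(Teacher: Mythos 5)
Your proposal is correct and matches the paper's own proof: both arguments take the two inclusions from Theorem \ref{3.7}(2) and (3), restrict the slots $Z^{**}$ to $Z$ and $W^{***}$ to $W^{*}$ via the canonical embeddings, and then apply the equivalence (3) $\Leftrightarrow$ (1) of Theorem \ref{2.2}. Your extra remark justifying the restrictions is exactly the ``in particular'' step the paper leaves implicit.
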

\begin{proof}
Both $h_{2}$ and $h_{3}$ are weakly compact, so by Theorem \ref{3.7} we have 
$$f^{***r*}(X^{**},W^{*},Z^{**})\subseteq Y^{*}\ \ \ , \ \ \ f^{*****}(W^{***},X^{**},Y^{**})\subseteq Z^{*}.$$
In particular
$$f^{***r*}(X^{**},W^{*},Z)\subseteq Y^{*}\ \ \ , \ \ \ f^{*****}(W^{*},X^{**},Y^{**})\subseteq Z^{*}.$$
Now by Theorem \ref{2.2}, $f$ is regular.
\end{proof}
The  converse of previous result is not true in general sense as following corollary.
\begin{corollary}\label{3.9}
With the assumptions Theorem \ref{3.7}, if $f$ is regular and both $g_{2}^{***r*}$ and $g_{3}^{*****}$ are factors, then $h_{2}$ and $h_{3}$ are weakly compact. 
\end{corollary}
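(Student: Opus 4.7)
The plan is to run the argument of Corollary \ref{3.8} in reverse. From the adjoint-chasing computations that underlie the proof of Theorem \ref{3.7}, one can derive two key identities that express the relevant adjoints of $f$ as compositions of $h_{2}^{***}$ (respectively $h_{3}^{***}$) with adjoints of $g_{2}$ (respectively $g_{3}$). Specifically, I would take each defining relation $f(x,y,z) = g_{2}(x, h_{2}(y), z)$ and $f(x,y,z) = g_{3}(x, y, h_{3}(z))$, dualize step by step exactly as in the proof of Theorem \ref{3.7}(1), and read off
\[
f^{***r*}(x^{**}, w^{*}, z^{**}) = h_{2}^{***}\bigl(g_{2}^{***r*}(x^{**}, w^{*}, z^{**})\bigr),
\]
\[
f^{*****}(w^{*}, x^{**}, y^{**}) = h_{3}^{***}\bigl(g_{3}^{*****}(w^{*}, x^{**}, y^{**})\bigr).
\]

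Next, since $f$ is regular, Theorem \ref{2.2} supplies the inclusions $f^{***r*}(X^{**}, W^{*}, Z) \subseteq Y^{*}$ and $f^{*****}(W^{*}, X^{**}, Y^{**}) \subseteq Z^{*}$. Combining these with the factor hypothesis on $g_{2}^{***r*}$: every $s^{***} \in S^{***}$ is of the form $g_{2}^{***r*}(x^{**}, w^{*}, z)$, and the first identity forces $h_{2}^{***}(s^{***}) = f^{***r*}(x^{**}, w^{*}, z) \in Y^{*}$. Hence $h_{2}^{***}(S^{***}) \subseteq Y^{*}$, which by Gantmacher's theorem is equivalent to the weak compactness of $h_{2}$. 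The analogous chase with $g_{3}^{*****}$ and the second identity yields the weak compactness of $h_{3}$.

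The main technical obstacle is aligning the two halves of the argument. Theorem \ref{2.2} only delivers the $Y^{*}$- and $Z^{*}$-inclusions on restricted slots (last argument of $f^{***r*}$ drawn from $Z$, first argument of $f^{*****}$ drawn from $W^{*}$), rather than on the full bidual-level domains. So in the key step one must read the factor hypothesis in the restricted sense already used in Corollary \ref{2.5}, i.e.\ selecting preimages $z \in Z$ (respectively $w^{*} \in W^{*}$) inside the factoring hypothesis on $g_{2}^{***r*}$ (resp.\ $g_{3}^{*****}$), so that the regularity inclusion actually bites on the value one lands on; everything else is mechanical adjoint-chasing and an appeal to Gantmacher.
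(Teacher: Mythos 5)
Your proposal is correct and follows essentially the same route as the paper's own proof: dualize the factorizations $f(x,y,z)=g_{2}(x,h_{2}(y),z)$ and $f(x,y,z)=g_{3}(x,y,h_{3}(z))$ step by step to obtain $f^{***r*}=h_{2}^{***}\circ g_{2}^{***r*}$ and $f^{*****}=h_{3}^{***}\circ g_{3}^{*****}$, feed the surjectivity of $g_{2}^{***r*}$ and $g_{3}^{*****}$ into these identities to get $h_{2}^{***}(S^{***})\subseteq Y^{*}$ and $h_{3}^{***}(S^{***})\subseteq Z^{*}$, and conclude weak compactness of $h_{2}$ and $h_{3}$ via Gantmacher, exactly as the paper does. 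The one point of divergence is the issue you flag yourself: the paper's proof simply writes $f^{***r*}(X^{**},W^{*},Z^{**})\subseteq Y^{*}$ (and, tacitly, the corresponding inclusion for $f^{*****}$ with $W^{***}$ in the first slot), i.e.\ it treats regularity as giving the inclusions on the full bidual-level slots, whereas Theorem \ref{2.2} only yields them with $z\in Z$ in the third slot of $f^{***r*}$ and $w^{*}\in W^{*}$ in the first slot of $f^{*****}$; those unrestricted inclusions are the conclusions of Theorem \ref{3.7}, available once $h_{2},h_{3}$ are already known to be weakly compact, not consequences of regularity alone. Your restricted reading of the factoring hypotheses (choosing preimages with $z\in Z$, respectively $w^{*}\in W^{*}$) is what makes the argument close using only what Theorem \ref{2.2} provides, at the price of reinterpreting ``$g_{2}^{***r*}$ and $g_{3}^{*****}$ factor'' relative to the paper's definition of factoring as surjectivity on the whole domain; in that respect your write-up is more careful than the paper's at this step, and either one accepts your restricted reading or one must justify the unrestricted inclusions, which the paper does not do.
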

\begin{proof}
Since $f^{***r*}(X^{**},W^{*},Z^{**})=h_{2}^{***}(g_{2}^{***r*}(X^{**},W^{*},Z^{**}))$, so $h_{2}^{***}(g_{2}^{***r*}$\\$(X^{**},W^{*},Z^{**}))\subseteq Y^{*}$. In the other hands $g_{2}^{***r*}$ is factors, so implies that $h_{2}^{***}(S^{***})\subseteq Y^{*}$. Therefore $h_{2}^{*}$ is weakly compact and implies that $h_{2}$ is weakly compact. The other part has the same argument for $h_{3}$.
\end{proof}

\section{\textbf{The fourth adjoint of a tri-derivation}}

\begin{definition}\label{4.1}
Let $(\pi_{1},X,\pi_2)$ be a  Banach $A-$module.  A bounded tri-linear mapping $D:A\times A\times A\longrightarrow X$  is
said to be a tri-derivation when
\begin{enumerate}
\item $D(\pi (a,d),b,c)=\pi_{2}(D(a,b,c),d)+\pi_{1}(a,D(d,b,c))$,

\item $D(a,\pi (b,d),c)=\pi_{2}(D(a,b,c),d)+\pi_{1}(b,D(a,d,c))$,

\item $D(a,b,\pi(c,d))=\pi_{2}(D(a,b,c),d)+\pi_{1}(c,D(a,b,d))$,
\end{enumerate}
for each $a, b,c,d \in A$. If  $(\pi_{1},X,\pi_{2})$ is a Banach $A-$module, then $(\pi_{2}^{r*r},X^{*},\pi_{1}^{*})$ is the dual Banach $A-$module of $(\pi_{1},X,\pi_{2})$. Therefore a bounded tri-linear mapping $D:A\times A\times A\longrightarrow  X^{*}$  is a tri-derivation when
\begin{enumerate}
\item $D(\pi (a,d),b,c)=\pi_{1}^{*}(D(a,b,c),d)+\pi_{2}^{r*r}(a,D(d,b,c))$,

\item $D(a,\pi (b,d),c)=\pi_{1}^{*}(D(a,b,c),d)+\pi_{2}^{r*r}(b,D(a,d,c))$,

\item $D(a,b,\pi(c,d))=\pi_{1}^{*}(D(a,b,c),d)+\pi_{2}^{r*r}(c,D(a,b,d))$.
\end{enumerate}
It can also be written, a bounded tri-linear mapping $D:A\times A\times A\longrightarrow A$  is said to be a tri-derivation when
\begin{enumerate}
\item $D(\pi (a,d),b,c)=\pi(D(a,b,c),d)+\pi(a,D(d,b,c))$,

\item $D(a,\pi (b,d),c)=\pi(D(a,b,c),d)+\pi(b,D(a,d,c))$,

\item $D(a,b,\pi(c,d))=\pi(D(a,b,c),d)+\pi(c,D(a,b,d))$.
\end{enumerate}
\end{definition}
\begin{example}\label{4.2}
Let $A$ be a Banach algebra, for any $a, b\in A$ the symbol $[a,b]=ab-ba$ stands for multiplicative commutator of $a$ and $b$. Let  $M_{n\times n}(C)$ be the Banach algebra of all $n\times n$ matrix and $A=\{ \begin{pmatrix} x & y \\ 0 & 0 \end{pmatrix}\in M_{n\times n}(C)| ~  x, y \in C\}$. Then $A$ is Banach algebra with the norm
\begin{equation*}
\parallel a\parallel=(\Sigma_{i,j}|\alpha_{ij}|^2)^{\frac{1}{2}} \ \ \ \ ,\ (a=(\alpha_{ij})\in A).
\end{equation*}
We define $D:A\times A\times A\longrightarrow A$ to be the bounded tri-linear map given by 
\begin{equation*}
D(a,b,c)=[ \begin{pmatrix} 0 & 1 \\ 0 & 0 \end{pmatrix},abc] \ \ \ ,\ \ \ (a,b,c \in A).
\end{equation*}
Then for $a= \begin{pmatrix}
x_{1} & y_{1} \\ 0 & 0
\end{pmatrix}, b=\begin{pmatrix} 
x_{2} & y_{2} \\ 0 & 0
\end{pmatrix}, c=\begin{pmatrix} 
x_{3} & y_{3} \\ 0 & 0
\end{pmatrix}$ and $d=\begin{pmatrix} x_{4} & y_{4} \\ 0 & 0 \end{pmatrix}\in A$ we have 

$D(\pi (a,d),b,c)= D(\begin{pmatrix} 
x_{1}x_{4} & x_{1}y_{4} \\ 0 & 0
\end{pmatrix},\begin{pmatrix} 
x_{2} & y_{2} \\ 0 & 0
\end{pmatrix},\begin{pmatrix} 
x_{3} & y_{3} \\ 0 & 0
\end{pmatrix}) \\
=[\begin{pmatrix} 
0 & 1 \\ 0 & 0
\end{pmatrix},\begin{pmatrix} 
x_{1}x_{2}x_{3}x_{4} & x_{1}x_{2}x_{4}y_{3} \\ 0 & 0
\end{pmatrix}]= \begin{pmatrix} 
0 & -x_{1}x_{2}x_{3}x_{4} \\ 0 & 0
\end{pmatrix} \\
=\begin{pmatrix} 
0 & -x_{1}x_{2}x_{3} \\ 0 & 0
\end{pmatrix}\begin{pmatrix} 
x_{4} & y_{4} \\ 0 & 0
\end{pmatrix}+\begin{pmatrix} 
x_{1} & y_{1} \\ 0 & 0
\end{pmatrix}\begin{pmatrix} 
0 & -x_{2}x_{3}x_{4} \\ 0 & 0
\end{pmatrix}\\
=(\begin{pmatrix} 
0 & 0 \\ 0 & 0
\end{pmatrix}-\begin{pmatrix} 
0 & x_{1}x_{2}x_{3} \\ 0 & 0
\end{pmatrix})\begin{pmatrix} 
x_{4} & y_{4} \\ 0 & 0
\end{pmatrix}+\begin{pmatrix} 
x_{1} & y_{1} \\ 0 & 0
\end{pmatrix}(\begin{pmatrix} 
0 & 0 \\ 0 & 0
\end{pmatrix}-\begin{pmatrix} 
0 & x_{2}x_{3}x_{4} \\ 0 & 0
\end{pmatrix}) \\
= (\begin{pmatrix} 
0 & 1 \\ 0 & 0
\end{pmatrix}\begin{pmatrix} 
x_{1}x_{2}x_{3} & x_{1}x_{2}y_{3} \\ 0 & 0
\end{pmatrix}-\begin{pmatrix} 
x_{1}x_{2}x_{3} & x_{1}x_{2}y_{3} \\ 0 & 0
\end{pmatrix}\begin{pmatrix} 
0 & 1 \\ 0 & 0
\end{pmatrix})\begin{pmatrix} 
x_{4} & y_{4} \\ 0 & 0
\end{pmatrix} \\
+ \begin{pmatrix} 
x_{1} & y_{1} \\ 0 & 0
\end{pmatrix}(\begin{pmatrix} 
0 & 1 \\ 0 & 0
\end{pmatrix}\begin{pmatrix} 
x_{2}x_{3}x_{4} & x_{2}x_{4}y_{3} \\ 0 & 0
\end{pmatrix}-\begin{pmatrix} 
x_{2}x_{3}x_{4} & x_{2}x_{4}y_{3} \\ 0 & 0
\end{pmatrix}\begin{pmatrix} 
0 & 1 \\ 0 & 0
\end{pmatrix}) \\ 
= [\begin{pmatrix} 
0 & 1 \\ 0 & 0
\end{pmatrix},\begin{pmatrix} 
x_{1}x_{2}x_{3} & x_{1}x_{2}y_{3} \\ 0 & 0
\end{pmatrix}]\begin{pmatrix} 
x_{4} & y_{4} \\ 0 & 0
\end{pmatrix} \\
+\begin{pmatrix} 
x_{1} & y_{1} \\ 0 & 0
\end{pmatrix}[\begin{pmatrix} 
0 & 1 \\ 0 & 0
\end{pmatrix},\begin{pmatrix} 
x_{2}x_{3}x_{4} & x_{2}x_{4}y_{3} \\ 0 & 0
\end{pmatrix}] \\ 
= [\begin{pmatrix} 
0 & 1 \\ 0 & 0
\end{pmatrix},\begin{pmatrix} 
x_{1} & y_{1} \\ 0 & 0
\end{pmatrix}\begin{pmatrix} 
x_{2} & y_{2} \\ 0 & 0
\end{pmatrix}\begin{pmatrix} 
x_{3} & y_{3} \\ 0 & 0
\end{pmatrix}]\begin{pmatrix} 
x_{4} & y_{4} \\ 0 & 0
\end{pmatrix} \\
+\begin{pmatrix} 
x_{1} & y_{1} \\ 0 & 0
\end{pmatrix}[\begin{pmatrix} 
0 & 1 \\ 0 & 0
\end{pmatrix},\begin{pmatrix} 
x_{4} & y_{4} \\ 0 & 0
\end{pmatrix}\begin{pmatrix} 
x_{2} & y_{2} \\ 0 & 0
\end{pmatrix}\begin{pmatrix} 
x_{3} & y_{3} \\ 0 & 0
\end{pmatrix}] \\
=D(\begin{pmatrix} 
x_{1} & y_{1} \\ 0 & 0
\end{pmatrix},\begin{pmatrix} 
x_{2} & y_{2} \\ 0 & 0
\end{pmatrix},\begin{pmatrix} 
x_{3} & y_{3} \\ 0 & 0
\end{pmatrix})\begin{pmatrix} 
x_{4} & y_{4} \\ 0 & 0
\end{pmatrix} \\
+\begin{pmatrix} 
x_{1} & y_{1} \\ 0 & 0
\end{pmatrix}D(\begin{pmatrix} 
x_{4} & y_{4} \\ 0 & 0
\end{pmatrix},\begin{pmatrix} 
x_{2} & y_{2} \\ 0 & 0
\end{pmatrix},\begin{pmatrix} 
x_{3} & y_{3} \\ 0 & 0
\end{pmatrix}) \\
=\pi(D(a,b,c),d)+\pi(a,D(d,b,c)).\\
\\ $
Similarly, we have $D(a,\pi (b,d),c)=\pi(D(a,b,c),d)+\pi(b,D(a,d,c))$ and  $D(a,b,$\\$\pi(c,d))=\pi(D(a,b,c),d)+\pi(c,D(a,b,d))$. Thus $D$ is tri-derivation.

\end{example}
Now, we provide a necessary and sufficient condition such that the fourth adjoint  $D^{****}$ of a tri-derivation $D:A\times A\times A\longrightarrow X$ is again a tri-derivation. For  the fourth adjoint $D^{****}$  of a tri-derivation $D:A\times A\times A\longrightarrow X$, we are faced with the case eight:
\begin{eqnarray*}
&&(case 1)\ \ \ \ \ D^{****}:(A^{**},\square)\times (A^{**},\square)\times (A^{**},\square) \longrightarrow X^{**},\\
&&(case 2)\ \ \ \ \ D^{****}:(A^{**},\lozenge)\times (A^{**},\square)\times (A^{**},\square) \longrightarrow X^{**},\\
&&(case 3)\ \ \ \ \ D^{****}:(A^{**},\square)\times (A^{**},\lozenge)\times (A^{**},\square) \longrightarrow X^{**},\\
&&(case 4)\ \ \ \ \ D^{****}:(A^{**},\square)\times (A^{**},\square)\times (A^{**},\lozenge) \longrightarrow X^{**},\\
&&(case 5)\ \ \ \ \ D^{****}:(A^{**},\lozenge)\times (A^{**},\lozenge)\times (A^{**},\square) \longrightarrow X^{**},\\
&&(case 6)\ \ \ \ \ D^{****}:(A^{**},\lozenge)\times (A^{**},\square)\times (A^{**},\lozenge) \longrightarrow X^{**},\\
&&(case 7)\ \ \ \ \ D^{****}:(A^{**},\square)\times (A^{**},\lozenge)\times (A^{**},\lozenge) \longrightarrow X^{**},\\
&&(case 8)\ \ \ \ \ D^{****}:(A^{**},\lozenge)\times (A^{**},\lozenge)\times (A^{**},\lozenge) \longrightarrow X^{**}.
\end{eqnarray*}
In the following, we prove the state of case 1. The remaining state are proved in the same way.
\begin{theorem}\label{4.3}
Let $(\pi_{1},X,\pi_2)$ be a  Banach $A-$module and $D:A\times A\times A\longrightarrow X$ be a tri-derivation. Then $D^{****}:(A^{**},\square)\times (A^{**},\square)\times (A^{**},\square) \longrightarrow X^{**}$ is a tri-derivation if and only if 
\begin{enumerate}
\item $\pi_{2}^{**r*}(D^{****}(A,A,A^{**}),X^{*})\subseteq A^{*}$,

\item $\pi_{2}^{****}(X^{*},D^{****}(A,A^{**},A^{**}))\subseteq A^{*}$,

\item $D^{****r*}(\pi_{1}^{****}(X^{*},A^{**}),A^{**},A^{**})\subseteq A^{*}$,

\item $D^{******}(A^{**},\pi_{1}^{****}(X^{*},A^{**}),A)\subseteq A^{*}$,

\item $D^{*******}(A^{**},A^{**},\pi_{1}^{****}(X^{*},A^{**}))\subseteq A^{*}$.
\end{enumerate}
\end{theorem}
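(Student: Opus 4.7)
The plan is to lift the three defining Leibniz identities for $D$ from $A^{4}$ to $(A^{**})^{4}$ through iterated weak*-limits, in the spirit of Arens and of the proofs already used in Theorem \ref{2.1} and Lemma \ref{3.4}. Choose weak*-convergent nets $a_\alpha \to a^{**}$, $b_\beta \to b^{**}$, $c_\gamma \to c^{**}$ and $d_\delta \to d^{**}$ in $A$, and fix $x^{*}\in X^{*}$. By Theorem \ref{2.1} the pairing $\langle D^{****}(a^{**},b^{**},c^{**}),x^{*}\rangle$ is the iterated $w^{*}$-limit (in the $\alpha\beta\gamma$-order dictated by the first Arens product $\square$) of $\langle x^{*},D(a_\alpha,b_\beta,c_\gamma)\rangle$, and likewise the lifted module actions $\pi_1^{(\cdot)}$ and $\pi_2^{(\cdot)}$ arise as iterated weak*-limits of $\pi_1$ and $\pi_2$. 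Consequently each tri-derivation identity transports to an equation between iterated weak*-limits over four nets, and the verification reduces in each case to one permutation of the limit order.

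For sufficiency I would treat the three identities one by one. In identity (1), $D(\pi(a,d),b,c)=\pi_2(D(a,b,c),d)+\pi_1(a,D(d,b,c))$, the left-hand side after lifting forces the $d_\delta$-limit to be taken together with $a_\alpha$ so that $a^{**}\square d^{**}$ appears as the first slot of $D^{****}$; interchanging this $d_\delta$-limit with $\pi_2^{(\cdot)}$ on the right is legitimate precisely when the iterated adjoint $\pi_2^{**r*}(D^{****}(A,A,A^{**}),X^{*})$ sits inside $A^{*}$, which is condition (1), while interchanging it with $\pi_1^{(\cdot)}$ uses condition (3), ensuring that $\pi_1^{****}(X^{*},A^{**})$ paired through $D^{****r*}$ remains in $A^{*}$. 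Lifting identities (2) and (3), where the derivation is applied in the middle and last argument, each adds an extra permutation step, accounted for by conditions (2), (4) and (5). In every step the routine calculation is the standard adjoint shift $\langle D^{(k)}(\cdots),\cdot\rangle=\langle \cdot,D^{(k-1)}(\cdots)\rangle$ combined with weak*-separate continuity.

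For necessity, assume $D^{****}$ satisfies the three identities under $\square$ and specialize the arguments: restricting $b,c\in A$ and $a^{**},d^{**}\in A^{**}$ in the first identity forces $\pi_2^{**r*}(D^{****}(a,b,d^{**}),x^{*})\in A^{*}$ for all $a,b\in A$ and $x^{*}\in X^{*}$, recovering (1); testing the remaining variables in analogous specializations of the three identities produces (2)--(5) in turn. The main obstacle is combinatorial bookkeeping: three identities with two summands and four free variables each generate many iterated-limit rearrangements, and one has to match each of the five range conditions with the correct rearrangement and the correct Arens extension of $\pi_1$ or $\pi_2$. The analytic content of each step is standard and identical to the manipulations already used in Sections 2 and 3.
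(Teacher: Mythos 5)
Your overall strategy --- lifting the three Leibniz identities through iterated weak\(^{*}\)-limits and reading each of conditions (1)--(5) as a licence to permute one particular limit --- is exactly the paper's approach. The problem is that the one piece of bookkeeping you make explicit is assigned incorrectly, and since you yourself identify the matching of conditions to rearrangements as the real content of the proof, this is a genuine gap rather than a presentational one. For the first identity \(D(\pi(a,d),b,c)=\pi_2(D(a,b,c),d)+\pi_1(a,D(d,b,c))\), the left side imposes the limit order \(\alpha,\tau,\beta,\gamma\), while \(\pi_2^{***}(D^{****}(a^{**},b^{**},c^{**}),d^{**})\) naturally carries the order \(\alpha,\beta,\gamma,\tau\): the \(\tau\)-limit must be moved past \emph{both} \(\beta\) and \(\gamma\). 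The condition that does this is (2), \(\pi_{2}^{****}(X^{*},D^{****}(A,A^{**},A^{**}))\subseteq A^{*}\), whose second argument \(D^{****}(a,b^{**},c^{**})\) has both inner variables already at the bidual level. Condition (1), which you invoke instead, only concerns \(D^{****}(A,A,A^{**})\) and is too weak here: it licenses moving \(\tau\) past the single limit \(\gamma\) (with \(a\) and \(b\) still in \(A\)), which is precisely what the \emph{second} identity requires for its \(\pi_2\)-term. Likewise, condition (3) is not needed for the \(\pi_1\)-term of the first identity at all --- \(\pi_1^{***}(a^{**},D^{****}(d^{**},b^{**},c^{**}))\) already has the natural order \(\alpha,\tau,\beta,\gamma\), matching the left side with no interchange --- whereas it is exactly what is needed to pull the \(\alpha\)-limit past the \(\beta\)-limit in the \(\pi_1\)-term of the second identity. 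The correct dictionary is: identity one uses (2); identity two uses (1) and (3); identity three uses (4) and (5).

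The same misalignment propagates to your necessity sketch: condition (1) is recovered by specializing the \emph{second} lifted identity with \(a,b\in A\) (so that \(D^{****}(a,b,c^{**})\) appears), not the first identity with \(b,c\in A\) and \(a^{**},d^{**}\in A^{**}\), which would instead produce a statement about \(D^{****}(a^{**},b,c)\) --- an object occurring in none of the five conditions. Once the dictionary is corrected, the rest of your outline (the adjoint-shifting computations, and in the converse direction the comparison of the two expressions for \(D^{****}(\pi^{***}(\cdot,\cdot),\cdot,\cdot)\) obtained from the derivation property of \(D^{****}\) and of \(D\) respectively) does go through and coincides with the paper's argument.
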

\begin{proof}
Let $D:A\times A\times A\longrightarrow X$ be a tri-derivation and (1),(2),(3),(4),(5) holds.
If $\{a_{\alpha}\},\{b_{\beta}\},\{c_{\gamma} \}$ and $\{d_{\tau} \}$ are bounded nets in $A$ , converging in $w^{*}-$topology
to $a^{**},b^{**},c^{**}$ and $d^{**}\in A^{**}$ respectively, in this case using (2), we conclude that 
$w^{*}-\lim\limits_{\alpha}w^{*}-\lim\limits_{\tau}w^{*}-\lim\limits_{\beta}w^{*}-\lim\limits_{\gamma}\pi_{2}(D(a_{\alpha},b_{\beta},c_{\gamma}),d_{\tau})=\pi_{2}^{***}(D^{****}(a^{**},b^{**},c^{**}),d^{**})$.
Thus for every $x^{*}\in X^{*}$ we have 
\begin{eqnarray*}
&&\langle D^{****}(\pi^{***}(a^{**},d^{**}),b^{**},c^{**}),x^{*}\rangle\\
&&=\lim\limits_{\alpha}\lim\limits_{\tau}\lim\limits_{\beta}\lim\limits_{\gamma}\langle x^{*},D(\pi(a_{\alpha},d_{\tau}),b_{\beta},c_{\gamma})\rangle\\
&&=\lim\limits_{\alpha}\lim\limits_{\tau}\lim\limits_{\beta}\lim\limits_{\gamma}\langle x^{*},\pi_{2}(D(a_{\alpha},b_{\beta},c_{\gamma}),d_{\tau})+\pi_{1}(a_{\alpha},D(d_{\tau},b_{\beta},c_{\gamma}))\rangle\\
&&=\lim\limits_{\alpha}\lim\limits_{\tau}\lim\limits_{\beta}\lim\limits_{\gamma}\langle x^{*},\pi_{2}(D(a_{\alpha},b_{\beta},c_{\gamma}),d_{\tau})\rangle\\
&&+\lim\limits_{\alpha}\lim\limits_{\tau}\lim\limits_{\beta}\lim\limits_{\gamma}\langle x^{*},\pi_{1}(a_{\alpha},D(d_{\tau},b_{\beta},c_{\gamma}))\rangle\\
&&=\langle x^{*},\pi_{2}^{***}(D^{****}(a^{**},b^{**},c^{**}),d^{**})\rangle +\langle x^{*},\pi_{1}^{***}(a^{**},D^{****}(d^{**},b^{**},c^{**}))\rangle\\
&&=\langle \pi_{2}^{***}(D^{****}(a^{**},b^{**},c^{**}),d^{**})+\pi_{1}^{***}(a^{**},D^{****}(d^{**},b^{**},c^{**})),x^{*}\rangle.
\end{eqnarray*}
Therefore
\begin{eqnarray*}
&&D^{****}(\pi^{***}(a^{**},d^{**}),b^{**},c^{**})\nonumber\\
&&=\pi_{2}^{***}(D^{****}(a^{**},b^{**},c^{**}),d^{**})+\pi_{1}^{***}(a^{**},D^{****}(d^{**},b^{**},c^{**})).
\end{eqnarray*}
Applying (1) and (3)  respectively, we can deduce that
$w^{*}-\lim\limits_{\alpha}w^{*}-\lim\limits_{\beta}w^{*}-\lim\limits_{\tau}w^{*}-\lim\limits_{\gamma}\pi_{2}(D(a_{\alpha},b_{\beta},c_{\gamma}),d_{\tau})=\pi_{2}^{***}(D^{****}(a^{**},b^{**},c^{**}),d^{**})$ and 
$w^{*}-\lim\limits_{\alpha}w^{*}-\lim\limits_{\beta}w^{*}-\lim\limits_{\tau}w^{*}-\lim\limits_{\gamma}\pi_{1}(b_{\beta},D(a_{\alpha},d_{\tau},c_{\gamma}))=\pi_{1}^{***}(b^{**},D^{****}(a^{**},d^{**},\\c^{**})).$
So in similar way, we can deduce that
\begin{eqnarray*}
&&D^{****}(a^{**},\pi^{***}(b^{**},d^{**}),c^{**})\nonumber\\
&&=\pi_{2}^{***}(D^{****}(a^{**},b^{**},c^{**}),d^{**})+\pi_{1}^{***}(b^{**},D^{****}(a^{**},d^{**},c^{**})).
\end{eqnarray*}
Applying (4) and (5), we can write
$w^{*}-\lim\limits_{\alpha}w^{*}-\lim\limits_{\beta}w^{*}-\lim\limits_{\gamma}w^{*}-\lim\limits_{\tau}\pi_{1}(c_{\gamma},D(a_{\alpha}\\,b_{\beta},d_{\tau}))=\pi_{1}^{***}(c^{**},D^{****}(a^{**},b^{**},d^{**})).$
Thus 
\begin{eqnarray*}
&&D^{****}(a^{**},b^{**},\pi^{***}(c^{**},d^{**}))\nonumber\\
&&=\pi_{2}^{***}(D^{****}(a^{**},b^{**},c^{**}),d^{**})+\pi_{1}^{***}(c^{**},D^{****}(a^{**},b^{**},d^{**})).
\end{eqnarray*}
By comparing equations (4.1), (4.2) and (4.3) follows that $D^{****}:(A^{**},\square)\times (A^{**},\square)\times (A^{**},\square) \longrightarrow X^{**}$ is a tri-derivation.

For the converse, let $D$ and $D^{****}:(A^{**},\square)\times (A^{**},\square)\times (A^{**},\square) \longrightarrow X^{**}$ be tri-derivation. We have to show that (1), (2), (3), (4) and (5) hold.  We shall only prove (2) the others parts  have similar argument. Fourth adjoint $D^{****}$ is  tri-derivation, thus we have
\begin{eqnarray*}
D^{****}(\pi ^{***}(a,d^{**}),b^{**},c^{**})&=&\pi_{2}^{***}(D^{****}(a,b^{**},c^{**}),d^{**})\\
&+&\pi_{1}^{***}(a,D^{****}(d^{**},b^{**},c^{**})).
\end{eqnarray*}
In the other hands, the mapping $D$ is tri-derivation, which follows that
\begin{eqnarray*}
D^{****}(\pi ^{***}(a,d^{**}),b^{**},c^{**})&=&w^{*}-\lim\limits_{\tau}w^{*}-\lim\limits_{\beta}w^{*}-\lim\limits_{\gamma}\pi_{2}(D(a,b_{\beta},c_{\gamma}),d_{\tau})\\
&+&\pi_{1}^{***}(a,D^{****}(d^{**},b^{**},c^{**})).
\end{eqnarray*}
Therefore follows that
\begin{eqnarray*}
&& \pi_{2}^{***}(D^{****}(a,b^{**},c^{**}),d^{**})\\&&=w^{*}-\lim\limits_{\tau}w^{*}-\lim\limits_{\beta}w^{*}-\lim\limits_{\gamma}\pi_{2}(D(a,b_{\beta},c_{\gamma}),d_{\tau}).
\end{eqnarray*}
 So, for every $d^{**}\in A^{**}$ we have
\begin{eqnarray*}
&&\langle \pi_{2}^{****}(x^{*},D^{****}(a,b^{**},c^{**})),d^{**}\rangle=\langle x^{*},\pi_{2}^{***}(D^{****}(a,b^{**},c^{**}),d^{**})\rangle\\
&&=\lim\limits_{\tau}\lim\limits_{\beta}\lim\limits_{\gamma}\langle x^{*},\pi_{2}(D(a,b_{\beta},c_{\gamma}),d_{\tau})\rangle=\lim\limits_{\tau}\lim\limits_{\beta}\lim\limits_{\gamma}\langle x^{*},\pi_{2}^{r}(d_{\tau},D(a,b_{\beta},c_{\gamma}))\rangle\\
&&=\lim\limits_{\tau}\lim\limits_{\beta}\lim\limits_{\gamma}\langle \pi_{2}^{r*}(x^{*},d_{\tau}),D(a,b_{\beta},c_{\gamma})\rangle=\lim\limits_{\tau}\lim\limits_{\beta}\lim\limits_{\gamma}\langle D^{*}(\pi_{2}^{r*}(x^{*},d_{\tau}),a,b_{\beta}),c_{\gamma}\rangle\\
&&=\lim\limits_{\tau}\lim\limits_{\beta}\langle c^{**},D^{*}(\pi_{2}^{r*}(x^{*},d_{\tau}),a,b_{\beta})\rangle=\lim\limits_{\tau}\lim\limits_{\beta}\langle D^{**}(c^{**},\pi_{2}^{r*}(x^{*},d_{\tau}),a),b_{\beta}\rangle\\
&&=\lim\limits_{\tau}\langle b^{**},D^{**}(c^{**},\pi_{2}^{r*}(x^{*},d_{\tau}),a)\rangle=\lim\limits_{\tau}\langle D^{***}(b^{**},c^{**},\pi_{2}^{r*}(x^{*},d_{\tau})),a\rangle\\
&&=\lim\limits_{\tau}\langle D^{****}(a,b^{**},c^{**}),\pi_{2}^{r*}(x^{*},d_{\tau})\rangle=\lim\limits_{\tau}\langle D^{****}(a,b^{**},c^{**}),\pi_{2}^{r*r}(d_{\tau},x^{*})\rangle\\
&&=\lim\limits_{\tau}\langle \pi_{2}^{r*r*}(D^{****}(a,b^{**},c^{**}),d_{\tau}),x^{*}\rangle=\lim\limits_{\tau}\langle \pi_{2}^{r*r**}(x^{*},D^{****}(a,b^{**},c^{**})),d_{\tau}\rangle\\
&&=\langle \pi_{2}^{r*r**}(x^{*},D^{****}(a,b^{**},c^{**})),d^{**}\rangle.
\end{eqnarray*}
As $\pi_{2}^{r*r**}(x^{*},D^{****}(a,b^{**},c^{**}))$ always lies in $A^{*}$, we have reached (2).
\end{proof}

For case 2, fourth adjoint $D^{****}$ of tri-derivation $D:A\times A\times A\longrightarrow X$ is a tri-derivation if and only if 
\begin{enumerate}
\item $\pi_{2}^{**r*}(D^{****}(A^{**},A^{**},A^{**}),X^{*})\subseteq A^{*}$,

\item $D^{****r*}(\pi_{1}^{****}(X^{*},A^{**}),A^{**},A^{**})\subseteq A^{*}$,

\item $D^{******}(A^{**},\pi_{1}^{****}(X^{*},A^{**}),A)\subseteq A^{*}$,

\item $D^{*******}(A^{**},A^{**},\pi_{1}^{****}(X^{*},A^{**}))\subseteq A^{*}$.
\end{enumerate}
For case 3, fourth adjoint $D^{****}$ of tri-derivation $D:A\times A\times A\longrightarrow X$ is a tri-derivation if and only if 
\begin{enumerate}
\item $\pi_{2}^{****}(X^{*},D^{****}(A,A^{**},A^{**}))\subseteq A^{*}$,

\item $D^{******}(A^{**},\pi_{1}^{****}(X^{*},A^{**}),A)\subseteq A^{*}$,

\item $D^{*******}(A^{**},A^{**},\pi_{1}^{****}(X^{*},A^{**}))\subseteq A^{*}$.
\end{enumerate}
For case 4, fourth adjoint $D^{****}$ of tri-derivation $D:A\times A\times A\longrightarrow X$ is a tri-derivation if and only if 
\begin{enumerate}
\item $\pi_{2}^{**r*}(D^{****}(A,A,A^{**}),X^{*})\subseteq A^{*}$,

\item $\pi_{2}^{****}(X^{*},D^{****}(A,A^{**},A^{**}))\subseteq A^{*}$,

\item $D^{****r*}(\pi_{1}^{****}(X^{*},A^{**}),A^{**},A^{**})\subseteq A^{*}$,

\item $D^{*****}(\pi_{1}^{****}(X^{*},A^{**}),A,A)\subseteq A^{*}$,

\item $D^{******}(A^{**},\pi_{1}^{****}(X^{*},A^{**}),A)\subseteq A^{*}$,

\item $D^{*******}(A^{**},A^{**},\pi_{1}^{****}(X^{*},A^{**}))\subseteq A^{*}$.
\end{enumerate}
For case 5, fourth adjoint $D^{****}$ of tri-derivation $D:A\times A\times A\longrightarrow X$ is a tri-derivation if and only if 
\begin{enumerate}
\item $\pi_{2}^{**r*}(D^{****}(A^{**},A^{**},A^{**}),X^{*})\subseteq A^{*}$,

\item $\pi_{2}^{****}(X^{*},D^{****}(A,A^{**},A^{**}))\subseteq A^{*}$,

\item $D^{****r*}(\pi_{1}^{****}(X^{*},A^{**}),A^{**},A^{**})\subseteq A^{*}$,

\item $D^{******}(A^{**},\pi_{1}^{****}(X^{*},A^{**}),A)\subseteq A^{*}$,

\item $D^{*******}(A^{**},A^{**},\pi_{1}^{****}(X^{*},A^{**}))\subseteq A^{*}$.
\end{enumerate}
For case 6, fourth adjoint $D^{****}$ of tri-derivation $D:A\times A\times A\longrightarrow X$ is a tri-derivation if and only if
\begin{enumerate}
\item $\pi_{2}^{**r*}(D^{****}(A^{**},A^{**},A^{**}),X^{*})\subseteq A^{*}$,

\item $D^{****r*}(\pi_{1}^{****}(X^{*},A^{**}),A^{**},A^{**})\subseteq A^{*}$,

\item $D^{*****}(\pi_{1}^{****}(X^{*},A^{**}),A,A)\subseteq A^{*}$,

\item $D^{******}(A^{**},\pi_{1}^{****}(X^{*},A^{**}),A)\subseteq A^{*}$,

\item $D^{*******}(A^{**},A^{**},\pi_{1}^{****}(X^{*},A^{**}))\subseteq A^{*}$.
\end{enumerate}
For case 7, fourth adjoint $D^{****}$ of tri-derivation $D:A\times A\times A\longrightarrow X$ is a tri-derivation if and only if
\begin{enumerate}
\item $\pi_{2}^{****}(X^{*},D^{****}(A,A^{**},A^{**}))\subseteq A^{*}$,

\item $\pi_{2}^{**r*}(D^{****}(A,A,A^{**}),X^{*})\subseteq A^{*}$,

\item $D^{*****}(\pi_{1}^{****}(X^{*},A^{**}),A,A)\subseteq A^{*}$,

\item $D^{******}(A^{**},\pi_{1}^{****}(X^{*},A^{**}),A)\subseteq A^{*}$,

\item $D^{*******}(A^{**},A^{**},\pi_{1}^{****}(X^{*},A^{**}))\subseteq A^{*}$.
\end{enumerate}
For case 8, fourth adjoint $D^{****}$ of tri-derivation $D:A\times A\times A\longrightarrow X$ is a tri-derivation if and only if
\begin{enumerate}
\item $\pi_{2}^{****}(X^{*},D^{****}(A,A^{**},A^{**}))\subseteq A^{*}$,

\item $\pi_{2}^{**r*}(D^{****}(A^{**},A^{**},A^{**}),X^{*})\subseteq A^{*}$,

\item $D^{****r*}(\pi_{1}^{****}(X^{*},A^{**}),A^{**},A^{**})\subseteq A^{*}$,

\item $D^{*****}(\pi_{1}^{****}(X^{*},A^{**}),A,A)\subseteq A^{*}$,

\item $D^{******}(A^{**},\pi_{1}^{****}(X^{*},A^{**}),A)\subseteq A^{*}$,

\item $D^{*******}(A^{**},A^{**},\pi_{1}^{****}(X^{*},A^{**}))\subseteq A^{*}$.
\end{enumerate}
\begin{remark}
For adjoint $D^{r****r}$ of tri-derivation $D:A\times A\times A\longrightarrow X$ we have the same argument.
\end{remark}


%
%



\end{document}